\newcommand{\field}[1]{\mathbb{#1}}
\newcommand{\C}{\field{C}}
\newcommand{\K}{\field{K}}
\newcommand{\R}{\field{R}}
\newcommand{\PP}{\field{P}}
\newcommand{\pa}[2]{\frac{\partial #1}{\partial #2}}
\newcommand{\tl}[1]{\tilde{#1}}
\DeclareMathOperator{\Sing}{Sing}
\DeclareMathOperator{\id}{id}
\DeclareMathOperator{\Ree}{Re}
\DeclareMathOperator{\Imm}{Im}
\newtheorem{definition}{Definition}[section]
\newtheorem{theorem}[definition]{Theorem}
\newtheorem{corollary}[definition]{Corollary}
\title [On quadratic polynomial mappings from $\mathbb{C}^3$ to $\mathbb{C}^2$]{On quadratic polynomial mappings from $\mathbb{C}^3$ to $\mathbb{C}^2$} \makeatletter
\thanks{The author was partially supported by the grant of Narodowe Centrum Nauki number 2019/33/B/ST1/00755.}
\date {\today}
\author{M. Farnik}
\address[M. Farnik]{Jagiellonian University\\
Faculty of Mathematics and Computer Science\\
{\L}ojasie\-wi\-cza~6, 30-348 Krak\'ow, Poland}
\email{michal.farnik@gmail.com}
\thanks{}
\keywords{quadratic polynomial mappings, linear equivalence, topological equivalence}
\subjclass[2020]{14D06, 14Q20, 14R99, 51M99}
\begin{document}

\begin{abstract} We classify quadratic polynomial mappings from $\mathbb{C}^3$ to $\mathbb{C}^2$ up to affine equivalence and topological equivalence. This is a part of a larger project, we have already classified mappings from $\mathbb{C}^2$ to $\mathbb{C}^n$ and inted to classify mappings from $\mathbb{C}^3$ to $\mathbb{C}^3$.
\end{abstract}

\maketitle

\section{Introduction}
Let $\Omega_{\K^k}(d_1,\ldots,d_n)$ denote the space of polynomial mappings $F=(f_1,\ldots,f_n):\K^k\to\K^n$ where $\deg f_i\leq d_i$ for $1\leq i\leq n$. Let $F,G\in \Omega_{\K^k}(d_1,\ldots,d_n)$. We say that $F$ is \emph{topologically} (respectively \emph{affinely} or \emph{linearly}) \emph{equivalent} to $G$ if there are homeomorphisms (respectively affine or linear automorphisms) $\Phi:\K^k\to\K^k$ and $\Psi:\K^n\to\K^n$ such that $F=\Psi\circ G\circ \Phi$. In the papers \cite{fj1,fj2} it was shown that for $\K$ equal $\C$ or $\R$ and a fixed $n>0$ the space $\Omega_{\K^2}(2,\ldots,2)$ of quadratic $\K^2\to\K^n$ mappings splits into a finite number of equivalence classes with respect to affine equivalence (hence also with respect to topological equivalence). Moreover, the authors provided a full classification of mappings in $\Omega_{\K^2}(2,\ldots,2)$. Here we focus on classifying the mappings in $\Omega_{\C^3}(2,2)$. We believe that obtaining the classification of $\Omega_{\R^3}(2,2)$ using similar methods is also possible but significantly more complicated than in the complex case. In the future we intend to classify mappings in $\Omega_{\C^3}(2,2,2)$, classifying quadratic mappings from $\mathbb{C}^3$ to $\mathbb{C}^2$ is a natural first step towards achieving this goal. 

Unlike in the $\K^2\to\K^n$ case, there are infinitely many equivalence classes of quadratic $\C^3\to\C^2$ mappings with respect to affine equivalence. This is quite obvious since $\Omega_{\C^3}(2,2)$ is an affine space of dimension $20$ and the groups of of affine automorphisms of $\C^3$ and $\C^2$ have dimensions $12$ and $6$, respectively. However, $\Omega_{\C^3}(2,2)$ splits into a finite number of equivalence classes with respect to topological equivalence. In fact there is a Zariski open dense subset $U\subset \Omega_{\C^3}(2,2)$ such that every mapping $F\in U$ has the same, generic, topological type. If a mapping $f$ has a generic topological type then we say that $f$ is a \emph{topologically generic} mapping. The affine equivalence classes of topologically generic mappings are parametrized by a two dimensional variety. There are also three topological types with affine equivalence classes parametrized by one dimensional varieties.

Now we present our main result -- we give the complete classification of quadratic polynomial mappings from $\mathbb{C}^3$ to $\mathbb{C}^2$  with respect to affine equivalence and with respect to topological equivalence. The notation and definitions are set up in Section~\ref{sec_def}. We select only one representative for each family of topologically equivalent mappings having infinitely many affine equivalence classes, they are $F_1$, $F_2$, $F_4$ and $F_8$. Items are enumerated according to the topological equivalence. We use the same number and distinct letters to enumerate affine equivalence classes of topologically equivalent mappings. There are $47$ classes for topological equivalence. There are $4$~infinite families and $60$ classes for affine equivalence.

We have the following cases:

\begin{itemize}

\item[(1)] (generic case) $F_1=(x^2+z^2+2y,y^2+z^2+2x+2z)$. The mapping $F_1$ is topologically equivalent to mappings $(x^2+z^2+y,y^2+z^2+\alpha x+\beta z)$ for $\alpha\beta\neq 0$ and $H_1(\alpha,\beta)\neq 0$ (see equation \eqref{F_mult1a}). The critical set $C(F_1)=V(xy-1,x(z+1)-z,yz-(z+1))$ is a smooth curve of degree $3$ parametrized by
$\varphi:\C\setminus\{0,-1\}\ni t\mapsto\left(t(t+1)^{-1},(t+1)t^{-1},t\right)\in C(F_1)$. The mapping $F_1$ has $6$ cusps at points $\varphi(t)$ for $t^3(t+1)^3+t^3-(t+1)^3=0$ and $4$ nodes at points $\varphi(t)$ for $(t+1)^4(t^2+1)^2+2t^2(t+1)^2(t^2-1)+t^4=0$.

\item[(2)] $F_2=(x^2+z^2+y,y^2+z^2+4x+iz)$. The mapping $F_2$ is topologically equivalent to mappings $(x^2+z^2+y,y^2+z^2+\alpha x+\beta z)$ for $\alpha\beta\neq 0$ and $H_1(\alpha,\beta)=0$ (see equation \eqref{F_mult1a}) and $(\alpha^2,\beta^2)\notin\{(1,4),(-4,-1),(-4^{-1},4^{-1})\}$. The critical set $C(F_2)=V(xy-1,x(2z+i)-4z,4yz-(2z+i))$ is a smooth curve of degree~$3$ parametrized by
$\varphi:\C\setminus\{0,-1\}\ni t\mapsto\left(4t(2t+i)^{-1},(2t+i)(4t)^{-1},t\right)\in C(F_2)$. The mapping $F_2$ has $4$ cusps at points $\varphi(t)$ for $(t+1)^4+t^3+6t^2+t=0$, a double cusp at $\varphi(1)$ and $3$ nodes at points $\varphi(t)$ for $(t^2+1)((t+1)^4+2t^3+6t^2+2t)=0$.

\item[(3)] $F_3=(x^2+z^2+y,y^2+z^2+x+2z)$. The critical set $C(F_3)=V(4xy-1,2x(z+1)-z,2yz-(z+1))$ is a smooth curve of degree $3$ parametrized by
$\varphi:\C\setminus\{0,-1\}\ni t\mapsto\left(t(2t+2)^{-1},(t+1)(2t)^{-1},t\right)\in C(F_3)$.
The mapping $F_3$ has two cusps at points $\varphi(t)$ for $t^2+t-1=0$, two double cusps at points $\varphi(t)$ for $t^2+t+2^{-1}=0$ and two nodes at points $\varphi(t)$ for $4(t^2+t)^2=0$.

\item[(4)] $F_4=(x^2+z^2+y,y^2+z^2+2x)$. The mapping $F_4$ is topologically equivalent to mappings $(x^2+z^2+y,y^2+z^2+\alpha x)$ for $\alpha\notin \{-1,0,1\}$. The critical set $C(F_4)=V(2xy-1,(x-1)z,(2y-1)z)$ is reducible and consists of the hyperbola $C_1=V(2xy-1,z)$ and the line $C_2=V(x-1,2y-1)$. The components $C_1$ and $C_2$ intersect at $(1,2^{-1},0)$. The restriction $F_4|_{C_1}$ is injective, the restriction $F_4|_{C_2}$ is generically $2:1$ and is branched at $(1,2^{-1},0)$. The mapping $F_4$ has $3$ cusps at points $(x,(2x)^{-1},0)$ for $4x^3-1=0$, the curves $F_4(C_1)$ and $F_4(C_2)$ intersect at points $F_4(x,(2x)^{-1},0)$ for $x\in\{1,2^{-1},-2^{-1}\}$.

\item[(5)] $F_5=(x^2+z^2+2y,y^2+z^2+2x)$. The critical set $C(F_5)=V(xy-1,(x-1)z,(y-1)z)$ is reducible and consists of the hyperbola $C_1=V(xy-1,z)$ and the line $C_2=V(x-1,y-1)$. The curves $C_1$ and $C_2$ intersect at $(1,1,0)$. The restriction $F_5|_{C_1}$ is injective, the restriction $F_5|_{C_2}$ is generically $2:1$ and is branched at $(1,1,0)$. The mapping $F_5$ has two cusps at points $(\varepsilon,\varepsilon^2,0)$ for $\varepsilon^3=1$ and $\varepsilon\neq 1$, the curves $F_5(C_1)$ and $F_5(C_2)$ intersect at points $F_5(1,1,0)$ and $F_5(-1,-1,0)$.

\item[(6)] $F_6=(x^2+z^2+2y,y^2+z^2)$. The critical set $C(F_6)=V(xy,xz,(y-1)z)$ is reducible and consists the $3$ lines: $C_1=V(x,y-1)$, $C_2=V(x,z)$, $C_3=V(y,z)$. The line $C_2$ intersects $C_1$ and $C_3$, the lines $C_1$ and $C_3$ do not intersect. The restriction $F_6|_{C_2}$ is injective, the restrictions $F_6|_{C_1}$ and $F_6|_{C_3}$ are generically $2:1$ and are branched at $(0,1,0)$ and $(0,0,0)$, respectively. The curves $F_6(C_1)$, $F_6(C_2)$ and $F_6(C_3)$ intersect pairwise at one point, the three points are distinct.

\item[(7)] $F_7=(x^2+z^2,y^2+z^2)$. The critical set $C(F_7)=V(xy,xz,yz)$ is reducible and consists of the $3$ lines: $C_1=V(x,y)$, $C_2=V(x,z)$, $C_3=V(y,z)$. The $3$ lines intersect at $(0,0,0)$. The restrictions $F_7|_{C_i}$, $i=1,2,3$, are generically $2:1$ and are branched at $(0,0,0)$.

\item[(8)] $F_8=(x^2+z^2+2y,yz+x)$. The mapping $F_8$ is topologically equivalent to mappings $(x^2+z^2+2y,yz+x+\alpha y)$ for $\alpha^4\neq -16$. The critical set $C(F_8)=V(xz-1,xy-z,y-z^2)$ is a smooth curve of degree $3$ parametrized by
$\varphi:\C\setminus\{-1\}\ni t\mapsto\left(t^{-1},t^2,t\right)\in C(F_8)$. The mapping $F_8$ has $4$ cusps at points $\varphi(t)$ for $3t^4-1=0$ and $2$ nodes at points $\varphi(t)$ for $t^4+1=0$.

\item[(9)] $F_9=(x^2+z^2+2y,yz+x+Ay)$ for $A=\sqrt{2}(1+i)$. The critical set $C(F_9)=V(x(z+A)-1,xy-z,y-z(z+A))$ is a smooth curve of degree~$3$ parametrized by
$\varphi:\C\setminus\{-1\}\ni t\mapsto\left((t+A)^{-1},t^2+At,t\right)\in C(F_9)$. The mapping $F_9$ has $2$ cusps at points $\varphi(t)$ for $3t^2+7At+17i=0$, a double cusp at $\varphi(-A/2)$ and a node at $\varphi(t)$ for $t^2+At-i=0$.

\item[(10)] $F_{10}=(x^2+z^2+2y,yz+y)$. The critical set $C(F_{10})=V(xy,x(z+1),y-z(z+1))$ is reducible and consists of the parabola $C_1=V(x,y-z(z+1))$ and the line $C_2=V(y,z+1)$. The curves $C_1$ and $C_2$ intersect at $(0,0,-1)$. The restriction $F_{10}|_{C_1}$ is injective, the restriction $F_{10}|_{C_2}$ is generically $2:1$ and is branched at $(0,0,-1)$. The mapping $F_{10}$ has a cusp at $(0,-2/9,-1/3)$, the curves $F_{10}(C_1)$ and $F_{10}(C_2)$ intersect at points $(0,0)$ and $(1,0)$.

\item[(11)] $F_{11}=(x^2+z^2,yz+x+y)$. The critical set $C(F_{11})=V(x(z+1),xy-z,z(z+1))$ is reducible and consists of the hyperbola $C_1=V(xy+1,z+1)$ and the line $C_2=V(x,z)$. The curves $C_1$ and $C_2$ do not intersect. The restrictions $F_{11}|_{C_1}$ and $F_{11}|_{C_2}$ are injective. The curves $F_{11}(C_1)$ and $F_{11}(C_2)$ intersect at points $(0,i)$ and $(0,-i)$.

\item[(12)] $F_{12}=(x^2+z^2,yz+y)$. The critical set $C(F_{12})=V(x(z+1),xy,z(z+1))$ is reducible and consists of the $3$ lines: $C_1=V(x,z)$, $C_2=V(x,z+1)$, $C_3=V(y,z+1)$. The line $C_1$ does not intersect $C_2$ and $C_3$, the lines $C_2$ and $C_3$ intersect at $(0,0,-1)$. The restriction $F_{12}|_{C_1}$ is injective, the restriction $F_{12}|_{C_2}$ is constant and the restriction $F_{12}|_{C_3}$ is generically $2:1$ and branched at $(0,0,-1)$.

\item[(13)] $F_{13}=(x^2+z^2+2y,yz)$. The critical set $C(F_{13})=V(xy,xz,y-z^2)$ is reducible and consists of the parabola $C_1=V(x,y-z^2)$ and the line $C_2=V(y,z)$. The curves $C_1$ and $C_2$ intersect at $(0,0,0)$. The restriction $F_{13}|_{C_1}$ is injective, the restriction $F_{13}|_{C_2}$ is generically $2:1$ and is branched at $(0,0,0)$. The curves $F_{13}(C_1)$ and $F_{13}(C_2)$ intersect only at $(0,0)$.

\item[(14)] $F_{14}=(x^2+z^2,yz+x)$. The critical set $C(F_{14})=V(x(z+1),xy-z,z(z+1))$ is reducible and consists of the double line $C_1=V(x^2,xy-z)$ and the line $C_2=V(y,z)$. The lines $C_1$ and $C_2$ intersect at $(0,0,0)$. The restriction $F_{14}|_{C_1}$ is constant and the restriction $F_{14}|_{C_2}$ is injective.

\item[(15)] $F_{15}=(x^2+z^2,yz)$. The critical set $C(F_{15})=V(x(z+1),xy-z,z(z+1))$ is reducible and consists of the double line $C_1=V(x,z^2)$ and the line $C_2=V(y,z)$. The lines $C_1$ and $C_2$ intersect at $(0,0,0)$. The restriction $F_{15}|_{C_1}$ is constant and the restriction $F_{15}|_{C_2}$ is generically $2:1$ and is branched at $(0,0,0)$.

\item[(16)] $F_{16}=(x^2+y^2+2z,z^2+2x)$. The critical set $C(F_{16})=V(xz-1,y)$ is a hyperbola. The mapping $F_{16}$ has $3$ cusps at points $(\varepsilon,0,\varepsilon^2)$ for $\varepsilon^3=1$.

\item[(17)] $F_{17}=(x^2+y^2,z^2+2x)$. The critical set $C(F_{17})=V(xz,y)$ is reducible and consists of the two lines: $C_1=V(x,y)$, $C_2=V(y,z)$. The lines $C_1$ and $C_2$ intersect at $(0,0,0)$. The restriction $F_{17}|_{C_1}$ is injective, the restriction $F_{17}|_{C_2}$ is generically $2:1$ and is branched at $(0,0,0)$. The curves $F_{17}(C_1)$ and $F_{17}(C_2)$ intersect only at $(0,0)$.

\item[(18)] $F_{18}=(xy+z,z^2+2x)$. The critical set $C(F_{18})=V(x,yz-1)$ is a hyperbola. The restriction $F_{18}|_{C(F_{18})}$ is injective. The image $F_{18}(C(F_{18}))$ is the parabola $V(q-p^2)$ without $(0,0)$.

\item[(19)] $F_{19}=(xy,z^2+2x)$. The critical set $C(F_{19})=V(x,yz)$ is reducible and consists of the two lines: $C_1=V(x,y)$, $C_2=V(x,z)$. The lines $C_1$ and $C_2$ intersect at $(0,0,0)$. The restriction $F_{19}|_{C_1}$ is generically $2:1$ and is branched at $(0,0,0)$, the restriction $F_{19}|_{C_2}$ is constant.

\item[(20)] $F_{20}=(x^2+y^2+2z,z^2)$. The critical set $C(F_{20})=V(xz,yz)$ is reducible and consists of the line $C=V(x,y)$ and the plane $H=V(z)$. The restriction $F_{20}|_{C}$ is injective and the image $F_{20}(H)$ is a line.

\item[(21)] $F_{21}=(x^2+y^2,z^2)$. The critical set $C(F_{21})=V(xz,yz)$ is reducible and consists of the line $C=V(x,y)$ and the plane $H=V(z)$. The restriction $F_{21}|_{C}$ is generically $2:1$ and is branched at $(0,0,0)$, the image $F_{21}(H)$ is a line.

\item[(22)] $F_{22}=(x^2+2yz,y^2+2xy+2z)$. The critical set $C(F_{22})=V(x-y^2,z-y^3-y^2)$ is a smooth curve of degree $3$ parametrized by
$\varphi:\C\setminus\ni t\mapsto\left(t^2,t,t^3+t^2\right)\in C(F_{22})$.
The mapping $F_{22}$ has two cusps at $\varphi(0)$ and $\varphi(-6^{-1})$ and a node at $\varphi(t)$ for $8t^2+4t-1=0$.

\item[(23)] $F_{23}=(x^2+2yz,y^2+2xy+3y/8+2z)$. The critical set $C(F_{23})=V(x-y^2,z-y^3-y^2-3y/8)$ is a smooth curve of degree $3$
parametrized by
$\varphi:\C\setminus\ni t\mapsto\left(t^2,t,t^3+t^2+3t/8\right)\in C(F_{23})$. The mapping $F_{23}$ has a double cusp at $\varphi(-4^{-1})$.

\item[(24a)] $F_{24}=(x^2+2yz,y^2+2xy+2x)$. The critical set $C(F_{24})=V(x(x+y)-(y+1)z,y(y+1),y(x+y))$ is reducible and consists of the parabola $C_1=V(y,x^2-z)$ and the line $C_2=V(x-1,y+1)$. The curves $C_1$ and $C_2$ do not intersect. The restrictions $F_{24}|_{C_i}$, $i=1,2$, are injective. The curves $F_{24}(C_1)$ and $F_{24}(C_2)$ intersect at $(4^{-1},1)$. The mapping is topologically equivalent to $(xy,(y+1)z)$.

\item[(24b)] $F_{37}=(xy,yz+z)$. The critical set $C(F_{37})=V(x(y+1),y(y+1),yz)$ is reducible and consists of the two lines: $C_1=V(x,y)$, $C_2=V(y+1,z)$. The lines $C_1$ and $C_2$ do not intersect. The restrictions $F_{37}|_{C_i}$, $i=1,2$, are injective. The images $F_{37}(C_1)$ and $F_{37}(C_2)$ intersect at $(0,0)$.

\item[(25)] $F_{25}=(x^2+2yz,y^2+2xy+2y)$. The critical set $C(F_{25})=V(x(x+y+1)-yz,xy+y,y^2)$ is reducible and consists of the line $C_1=V(x,y)$ and the double line $C_2=V(x+1+y(z+1),y^2)$. The lines $C_1$ and $C_2$ do not intersect. The restrictions $F_{25}|_{C_i}$, $i=1,2$, are constant.

\item[(26)] $F_{26}=(x^2+2yz,y^2+2xy+2y)$. The critical set $C(F_{26})=V(x^2-yz,xy,y^2)$ is a triple line. The image $F_{26}(C(F_{26}))$ is a point.

\item[(27)] $F_{27}=(x^2+2yz,z^2+2y)$. The critical set $C(F_{27})=V(x,y-z^2)$ is a parabola. The mapping $F_{27}$ has a cusp at $(0,0,0)$.

\item[(28a)] $F_{28}=(x^2+2yz,z^2+2x)$. The critical set $C(F_{28})=V(y,z)$ is a line. The restriction $F_{28}|_{C(F_{28})}$ is injective. The mapping is topologically equivalent to $(x,yz)$.

\item[(28b)] $F_{31}=(x^2+2z,y^2+2z)$. The critical set $C(F_{31})=V(x,y)$ is a line. The restriction $F_{31}|_{C(F_{31})}$ is injective. The mapping is topologically equivalent to $(x,yz)$.

\item[(28c)] $F_{40}=(xy,y^2+2z)$. The critical set $C(F_{40})=V(x,y)$ is a line. The restriction $F_{40}|_{C(F_{40})}$ is injective. The mapping is topologically equivalent to $(x,yz)$.

\item[(28d)] $F_{46}=(x^2+yz,x)$. The critical set $C(F_{46})=V(y,z)$ is a line. The restriction $F_{46}|_{C(F_{46})}$ is injective. The mapping is topologically equivalent to $(x,yz)$.

\item[(28e)] $F_{49}=(x^2+y^2,z)$. The critical set $C(F_{49})=V(x,y)$ is a line. The restriction $F_{49}|_{C(F_{49})}$ is injective. The mapping is topologically equivalent to $(x,yz)$.

\item[(29)] $F_{29}=(x^2+2yz,z^2+2z)$. The critical set $C(F_{29})=V(x(z+1),z(z+1))$ is reducible and consists of the line $C=V(x,z)$ and the plane $H=V(z+1)$. The image $F_{29}(C)$ is a point and the image $F_{29}(H)$ is a line.

\item[(30)] $F_{30}=(x^2+2yz,z^2)$. The critical set $C(F_{30})=V(xz,z^2)$ is the plane $H=V(z)$ with the embedded double line $C=V(x,z^2)$. The image $F_{30}(C)$ is a point and the image $F_{30}(H)$ is a line.

\item[(31a)] $F_{32}=(x^2+z,y^2+x)$ with empty critical set. The mapping is topologically equivalent to $(x,y)$.

\item[(31b)] $F_{41}=(xy+z,y^2+x)$ with empty critical set. The mapping is topologically equivalent to $(x,y)$.

\item[(31c)] $F_{50}=(x^2+y^2+z,x)$ with empty critical set. The mapping is topologically equivalent to $(x,y)$.

\item[(31d)] $F_{52}=(xy+z,x)$ with empty critical set. The mapping is topologically equivalent to $(x,y)$.

\item[(31e)] $F_{56}=(x^2+z,y)$ with empty critical set. The mapping is topologically equivalent to $(x,y)$.

\item[(31f)] $F_{58}=(x^2+y,x)$ with empty critical set. This is $f_{10}$ from \cite{fj1}. The mapping is topologically equivalent to $(x,y)$.

\item[(31g)] $F_{62}=(x,y)$ with empty critical set. This is $f_{12}$ from \cite{fj1}.

\item[(32a)] $F_{33}=(x^2+z,y^2)$. The critical set $C(F_{33})=V(y)$ is a plane. The image $F_{33}(C(F_{33}))$ is a line. The mapping is topologically equivalent to $(x,y^2)$.

\item[(32b)] $F_{42}=(xy+z,y^2)$. The critical set $C(F_{42})=V(y)$ is a plane. The image $F_{42}(C(F_{42}))$ is a line. The mapping is topologically equivalent to $(x,y^2)$.

\item[(32c)] $F_{51}=(x^2+y^2,x)$. This is $f_7$ from \cite{fj1}. The critical set $C(F_{51})=V(y)$ is a plane. The image $F_{51}(C(F_{51}))$ is a parabola. The mapping is topologically equivalent to $(x,y^2)$.

\item[(32d)] $F_{57}=(x^2,y)$. This is $f_9$ from \cite{fj1}. The critical set $C(F_{57})=V(x)$ is a plane. The image $F_{57}(C(F_{57}))$ is a line.

\item[(33)] $F_{34}=(x^2+2y,y^2+2x)$. This is $f_1$ from \cite{fj1}. The critical set $C(F_{34})=V(xy-1)$ is a product of a line and hyperbola. The image $F_{34}(C(F_{34}))$ is a curve with three cusps.

\item[(34)] $F_{35}=(x^2+2y,y^2)$. This is $f_3$ from \cite{fj1}. The critical set $C(F_{35})=V(xy)$ is reducible and consists of two intersecting planes. The image $F_{35}(C(F_{35}))$ consist of a parabola and a tangent line.

\item[(35)] $F_{36}=(x^2,y^2)$. This is $f_4$ from \cite{fj1}. The critical set $C(F_{36})=V(xy)$ is reducible and consists of two intersecting planes. The image $F_{36}(C(F_{36}))$ consist of two intersecting lines.

\item[(36)] $F_{38}=(xy+z,yz)$. The critical set $C(F_{38})=V(xy-z,y^2)$ is a double line. The image $F_{38}(C(F_{38}))$ is a point.

\item[(37)] $F_{39}=(xy,yz)$. The critical set $C(F_{39})=V(xy,y^2,yz)$ is the plane $H=V(y)$ with the embedded double point $C=V(x,y^2,z)$. The image $F_{39}(H)$ is a point.

\item[(38)] $F_{43}=(xy,y^2+2x)$. This is $f_2$ from \cite{fj1}. The critical set $C(F_{43})=V(x-y^2)$ is a product of a line and parabola. The image $F_{43}(C(F_{43}))$ is a curve with one cusp.

\item[(39)] $F_{44}=(xy,y^2+2y)$. This is $f_5$ from \cite{fj1}. The critical set $C(F_{44})=V(y^2+y)$ is reducible and consists of two planes. The image $F_{44}(C(F_{44}))$ consists of a line and a point.

\item[(40)] $F_{45}=(xy,y^2)$. This is $f_6$ from \cite{fj1}. The critical set $C(F_{45})=V(y^2)$ is a double plane planes. The image $F_{45}(C(F_{45}))$ is a point.

\item[(41)] $F_{47}=(x^2+yz,y)$. The critical set $C(F_{47})=V(x,y)$ is a line. The image $F_{47}(C(F_{47}))$ is a point.

\item[(42)] $F_{48}=(x^2+y^2+z^2,0)$ with critical equal $\C^3$. 

\item[(43)] $F_{53}=(xy,x)$. This is $f_8$ from \cite{fj1}. The critical set $C(F_{53})=V(y)$ is a plane. The image $F_{53}(C(F_{53}))$ is a point.

\item[(44a)] $F_{54}=(x^2+y^2+z,0)$ with critical set equal to $\C^3$. The mapping is topologically equivalent to $(x,0)$.

\item[(44b)] $F_{59}=(x^2,x)$ with critical set equal to $\C^3$. This is $f_{11}$ from \cite{fj1}. The mapping is topologically equivalent to $(x,0)$.

\item[(44c)] $F_{60}=(x^2+y,0)$ with critical set equal to $\C^3$. This is $f_{14}$ from \cite{fj1}. The mapping is topologically equivalent to $(x,0)$.

\item[(44d)] $F_{63}=(x,0)$ with critical set equal to $\C^3$. This is $f_{16}$ from \cite{fj1}.

\item[(45)] $F_{55}=(x^2+y^2,0)$ with critical set equal to $\C^3$. This is $f_{13}$ from \cite{fj1}.

\item[(46)] $F_{61}=(x^2,0)$ with critical set equal to $\C^3$. This is $f_{15}$ from \cite{fj1}.

\item[(47)] $F_{64}=(0,0)$ with critical set equal to $\C^3$. This is $f_{17}$ from \cite{fj1}.

\end{itemize}

We will show in Sections \ref{sec_hom}--\ref{sec_specinf} that the classification above enumerates all affine equivalence classes.

In most cases the topological equivalence or the lack thereof is quite obvious. Observe that for any quadratic $\C^3\to\C^2$ mapping having at least one cusp, double cusp or node the topological type of the mapping is uniquely determined by the number of those singularities. For mappings without those singularities we look at the number and types of irreducible components of the critical set (or the scheme associated with the ideal generated by the minors of the Jacobian matrix), at the behavior of the restriction of the mapping to each component of the critical set, at the image of each component and at the intersections of those images. We will now give brief arguments for the least obvious cases. 

\begin{itemize}
\item Both $F_{13}(C(F_{13}))$ and $F_{17}(C(F_{17}))$ have two components intersecting only at $(0,0)$, however, one of the components of $F_{13}(C(F_{13}))$ has a cusp at $(0:0)$ and both components of $F_{17}(C(F_{17}))$ are smooth.
\item We compose $F_{24}$ with $(p/2-q^2/8,q-1)$ to obtain $(y(z+h(2x,y)),y^2+2xy+2x-1)$. Then with $(x/2,y,z-h(x,y))$ to obtain $(yz,(x+y-1)(y+1))$ and with $(z-y+1,y,x)$ to obtain $(xy,(y+1)z)$.

\item We compose $F_{28}$ with $(2x,y,2z)$ and $(p/4,q/4)$ to obtain $(x^2+yz,z^2+x)$. Then with $(x-z^2,y+2xz+z^3,z)$ to obtain $(x^2+yz,x)$ and with $(q,p-q^2)$ to obtain $(x,yz)$.

\item We compose $F_{31}$ with $(p,p-q)$ to obtain $(x^2+2z,x^2-y^2)$ and with $(x,y,(z-x^2)/2)$ to obtain $(z,x^2-y^2)$. Finally compose with $((y+z)/2,(y-z)/2,x)$ to obtain $(x,yz)$.

\end{itemize}

\section{Notation and definitions}\label{sec_def}

Throughout the paper we will consider mappings $F=(f,g):\C^3\rightarrow\C^2$. We take $f=a_1x^2+a_2xy+a_3xz+a_4y^2+a_5yz+a_6z^2+a_7x+a_8y+a_9z+a_{10}$ and, similarly, $g=b_1x^2+\ldots+b_{10}$. Note that $a_i$ and $b_i$ denote coefficients at the appropriate monomials and often change. After composing $F$ with an affine or linear automorphism the coefficients change but we do not introduce new symbols. By $h_x=\pa{h}{x}$ we denote the partial differential of a function $h$ with respect to a variable $x$. We denote $m_{xy}=f_xg_y-f_yg_x$ and similarly for $m_{xz}$ and $m_{yz}$. We denote the critical set of $F$ by $C(F)=V(m_{xy},m_{xz},m_{yz})$, we call the set of critical of $F$ the discriminant of $F$ and denote it by by $\Delta(F)=F(C(F))$.

We denote by $\tl{F}=(\tl{f},\tl{g})$ the mapping obtained by taking the homogeneous parts of maximal degree of the components of a mapping $F=(f,g)$. If both components of $F$ are of degree two then we will often consider $\tl{F}$ as a birational mapping $(\tl{f}:\tl{g}):\PP^2\rightarrow\PP^1$. We take $\tl{m}_{xy}=\tl{f}_x\tl{g}_y-\tl{f}_y\tl{g}_x$, it is the homogeneous part of maximal degree of $m_{xy}$.

We use $x$, $y$ and $z$ for the coordinates in the source $\C^3$ or $\PP^2$ and $p$, $q$ for the coordinates in the target $\C^2$ or $\PP^1.$ The letters $a_i$, $b_i$, $A$, $B$ will denote parameters or coordinates in parameter space.

If there is no danger of confusion we will omit the arguments of a mapping, i.e., we will write $(f_1(x_1,\ldots,x_k),\ldots,f_n(x_1,\ldots,x_k))$ instead of 
$(x_1,\ldots,x_k)\mapsto$\linebreak  $(f_1(x_1,\ldots,x_k),\ldots,f_n(x_1,\ldots,x_k))$.

Let $F:\C^3\rightarrow\C^2$ be a mapping. We say that $P\in C(F)$ is a \emph{cusp} of $F$ if $F^{-1}(F(P))\cap\C(F)=\{P\}$ and there are neighborhoods $U_1$ of $P_1$ and $U_2$ of $F(P)$, open sets $V_1\subset\C^3$ and $V_2\subset\C^2$ and biholomorphic mappings $h_1:(U_1,P)\rightarrow(V_1, 0)$, $h_2(U_2,F(P))\rightarrow(V_2, 0)$ such that $F|_{U_1}=h_2^{-1}\circ F_1\circ h_1$ for $F_1(x,y,z)=(x,y^3+xy+z^2)$. Note that if $P$ is a cusp of $F$ then $P$ is a smooth point of $C(F)$ and a critical point of the restriction $F|_{C(F)}$. Moreover the irreducible component of $\Delta(F)$ containing $F(P)$ is a curve with a cusp singularity at $F(P)$.

Similarly, we call $P\in C(F)$ a \emph{double cusp} if $F$ restricted to some neighborhood of $P$ is biholomorphically equivalent to $(x,y^4+xy+z^2)$.

We say that $P_1,P_2\in C(F)$ are a \emph{node} of $F$ if $F^{-1}(F(P_1))\cap\C(F)=\{P_1,P_2\}$, the points $P_1$ and $P_2$ belong to the same irreducible component of $C(F)$ and there are neighborhoods $U_1$ of $P_1$, $U_2$ of $P_2$ and $U_3$ of $F(P_1)$, open sets $V_1,V_2\subset\C^3$ and $V_3\subset\C^2$ and biholomorphic mappings $h_1:(U_1,P_1)\rightarrow(V_1, 0)$, $h_2(U_2,P_2)\rightarrow(V_2, 0)$ and $h_3(U_3,F(P_1))\rightarrow(V_3, 0)$ such that $F|_{U_1}=h_3^{-1}\circ F_1\circ h_1$ for $F_1(x,y,z)=(x,y^2+z^2)$ and $F|_{U_2}=h_3^{-1}\circ F_2\circ h_2$ for $F_1(x,y,z)=(y^2+z^2,x)$. Note that if $P_1$ and $P_2$ are a node of $F$ then $P_1,P_2$ are smooth points of $C(F)$ and regular points of the restriction $F|_{C(F)}$. Moreover the irreducible component of $\Delta(F)$ containing $F(P_1)$ is a curve with a node singularity at $F(P_1)$.

\section{Classification of homogeneous mappings}\label{sec_hom}

In this section we will consider $F=(f,g)$ such that $f$ and $g$ are homogeneous. We will enumerate up to linear equivalence all possible cases for $F$. First, we will consider the case when $f$ and $g$ are of degree $2$ and one is not a multiple of the other, i.e., $F$ is not equivalent to $(f,0)$. Later we will enumerate the cases with lower degrees.

We begin by taking care of some obvious special cases. First, consider the case when $F$ is equivalent to $(f,z^2)$. If $z$ divides $f$ then $F$ is obviously equivalent to 
\begin{equation}\label{FF_gen8}
\tl{F}_8=(xy,y^2).
\end{equation}
If $z$ does not divide $f$ then $a_1$, $a_2$ or $a_4$ is nonzero. By composing with $(x,y+ax,z)$ for generic $a$ we may assume that $a_1\neq 0$. By composing with $(a_1^{-1/2}x,y,z)$ we obtain $(x^2+a_2xy+a_3xz+a_4y^2+a_5yz+a_6z^2, z^2)$. Composing with $(x-a_2/2 y-a_3/2 z, y,z)$ we obtain $(x^2+a_4y^2+a_5yz+a_6z^2, z^2)$. If $a_4\neq 0$ then we compose with $(x,a_4^{-1/2}y-\sqrt{a_4}a_5/2 z, z)$ to obtain $(x^2+y^2+a_6z^2, z^2)$ and with $(p-a_6 q,q)$ to obtain
\begin{equation}\label{FF_gen3}
\tl{F}_3=(x^2+y^2,z^2).
\end{equation}
Now assume $a_4=0$. If $a_5\neq 0$ then composing with $(x,a_5^{-1}(2y-a_6z),z)$ we obtain
\begin{equation}\label{FF_gen5}
\tl{F}_5=(x^2+2yz,z^2).
\end{equation}
If $a_5=0$ then we compose with $(x,z,y)$ and $(p-a_6 q,q)$ to obtain
\begin{equation}\label{FF_gen6}
\tl{F}_6=(x^2,y^2).
\end{equation}

Now we adopt a geometric approach. We will consider $F$ as a birational $\PP^2\rightarrow\PP^1$ mapping.
It is easy to see that for generic $F$ the sets $V(m_{xy})$ and $V(m_{xz})$ are quadrics, their intersection consists of $4$ points. The curve $V(m_{yz})$ omits the point $V(f_x,g_x)$ and contains the other three. Thus, we will begin with the case when $C(F)$ has three points, then follow with the case of double point and a singular point and other nongeneric cases.

Recall, that we have
$$F=(a_1x^2+a_2xy+a_3xz+a_4y^2+a_5yz+a_6z^2,b_1x^2+b_2xy+b_3xz+b_4y^2+b_5yz+b_6z^2).$$
We compute
$$m_{xy}=(2a_1x+a_2y+a_3z)(b_2x+2b_4y+b_5z)-(a_2x+2a_4y+a_5z)(2b_1x+b_2y+b_3z),$$
$$m_{xz}=(2a_1x+a_2y+a_3z)(b_3x+b_5y+2b_6z)-(a_3x+a_5y+2a_6z)(2b_1x+b_2y+b_3z),$$
$$m_{yz}=(a_2x+2a_4y+a_5z)(b_3x+b_5y+2b_6z)-(a_3x+a_5y+2a_6z)(b_2x+2b_4y+b_5z).$$

Assume that $C(F)$ has $3$ distinct points. If the points are collinear, then by B\'ezout's Theorem each of the three quadrics defining $C(F)$ contains the line passing through the $3$ points. Consequently, $C(F)$ contains a line, which is a special case that we will consider later. Assume that the points are not collinear. By composing $F$ with a linear automorphism of $\PP^2$ we may assume that the points of $C(F)$ are $(1:0:0)$, $(0:1:0)$ and $(0:0:1)$.

We will show that if $F$ is well defined at two distinct points of $C(F)$ and has the same value for both of them then we can reduce to the already solved case $F=(f,z^2)$ or $F=(f,0)$. Indeed, we may assume that $(a_1:b_1)=F(1:0:0)=(1:0)$ and $(a_4:b_4)=F(0:1:0)=(1:0)$, which means that $a_1,a_4\neq 0$ and $b_1=b_4=0$. Moreover, $0=m_{xy}(1:0:0)=2a_1b_2$, $0=m_{xz}(1:0:0)=2a_1b_3$ and $0=m_{yz}(0:1:0)=2a_4b_5$, so $b_2=b_3=b_5=0$. Thus, from now on we may assume that $F$ is injective on all points of $C(F)$ on which it is well defined.

Now we look in how many points of $C(F)$ the birational mapping $F$ is well defined. First consider the case when $F$ is well defined at $3$ points of $C(F)$. By composing with a linear automorphism of $\PP^1$ we may assume that the corresponding points of $F(C(F))$ are $(1:0)$, $(0:1)$ and $(1:1)$.

From $(1:0)=F(1:0:0)=(a_1:b_1)$ we obtain $a_1\neq 0$ and $b_1=0$. Furthermore, $(0:1)=F(0:1:0)=(a_4:b_4)$ and $(1:1)=F(0:0:1)=(a_6:b_6)$ yields $a_4=0$, $b_4\neq 0$ and $a_6=b_6\neq 0$. Replacing $(x,y,z)$ with $(x/\sqrt{a_1},y/\sqrt{b_4},x/\sqrt{a_6})$ we may assume that $a_1=b_4=a_6=b_6=1$.

We have $0=m_{xy}(1:0:0)=2b_2$ and $0=m_{xy}(0:1:0)=2a_2$. Next, $0=m_{xz}(1:0:0)=2b_3$ and $0=m_{xz}(0:0:1)=2a_3-2b_3$. Finally,
$0=m_{yz}(0:1:0)=-2a_5$ and $0=m_{yz}(0:0:1)=2a_5-2b_5$.

Thus we obtain
\begin{equation}\label{FF_gen1}
\tl{F}_1=(x^2+z^2,y^2+z^2).
\end{equation}

Now consider the case when $F$ is well defined at $2$ points of $C(F)$ and not well defined at the third. As before, $F(1:0:0)=(1:0)$ and $F(0:1:0)=(0:1)$ yield $a_1=b_4=1$ and $a_4=b_1=0$. Furthermore, $F(0:0:1)=(0:0)$ means that $a_6=b_6=0$. Next, we have $m_{xy}(1:0:0)=2b_2=0$, $m_{xy}(0:1:0)=2a_2=0$, $m_{xz}(1:0:0)=2b_3=0$ and $m_{yz}(0:1:0)=2a_5=0$. We arrive at $F=(x^2+a_3xz,y^2+b_5yz)$. We have $m_{xy}(0:0:1)=a_3b_5=0$. So at least one of $f$ and $g$ is a square.

Now consider the case when $F$ is well defined at one point of $C(F)$ and not well defined at two other. As before, $F(1:0:0)=(1:0)$ yields $a_1=1$ and $b_1=0$. Furthermore, $F(0:1:0)=F(0:0:1)=(0:0)$ means that $a_4=a_6=b_4=b_6=0$. Next, we have $m_{xy}(1:0:0)=2b_2=0$ and $m_{xz}(1:0:0)=2b_3=0$.
If $b_5=0$ then $F=(f,0)$. So we can assume $b_5\neq 0$ and by dividing $g$ by $b_5$ obtain $b_5=1$. We have $m_{xz}(0:1:0)=a_2=0$ and $m_{xy}(0:0:1)=a_3=0$. We arrive at $F=(x^2+a_5yz,yz)$, composing with $(q,p-a_5q)$ we obtain $F=(yz,x^2)$, so $g$ is a square.

Now, if $F$ is not well defined at $3$ points of $C(F)$ then we obtain $F=(a_2xy+a_3xz+a_5yz,b_2xy+b_3xz+b_5yz)$. The conditions $m_{xy}(0:0:1)=m_{xz}(0:1:0)=m_{yz}(1:0:0)=0$ mean that the vectors $[a_2,a_3,a_5]$ and $[b_2,b_3,b_5]$ are proportional, hence $F$ is equivalent to $(f,0)$.

Now, we assume that $C(F)$ has two points, one of which is double. Note that if the tangent to the double point passes through the smooth point, then by B\'ezout's Theorem $C(F)$ must contain the line through these points. We assume that this is not the case. So we may assume that $(1:0:0)$ and $(0:1:0)$ are critical points and that the line $V(y)$ (but not the whole plane) is tangent to $C(F)$ at $(1:0:0)$. From $m_{xy}(1:0:0)=m_{xz}(1:0:0)=m_{yz}(1:0:0)=0$ we obtain that the vectors $[a_1,a_2,a_3]$ and $[b_1,b_2,b_3]$ are proportional. Thus we may assume that $b_1=b_2=b_3=0$. Since $V(y)$ is tangent to $C(F)$ at $(1:0:0)$ we have $\partial m_{xy}/\partial z (1:0:0)=2a_1b_5=0$ and $\partial m_{xz}/\partial z (1:0:0)=4a_1b_6=0$. If we had $a_1\neq 0$, then $b_5=b_6=0$, so $g=b_4y^2$ would be a square. So $a_1=0$. W have $m_{xy}(0:1:0)=2a_2b_4=0$ and $m_{xz}(0:1:0)=a_2b_5=0$. Again, $a_2\neq 0$ would imply $b_4=b_5=0$ and $g=b_6z^2$, so $a_2=0$. We have $\partial m_{xy}/\partial y (1:0:0)=\partial m_{xz}/\partial z (1:0:0)=0$, since $V(y)$ is tangent but the whole plane not, we must have $\partial m_{yz}/\partial z (1:0:0)=2a_3b_4\neq 0$. By multiplying $f$ and $g$ we may assume $a_3=b_4=1$. Next, $\partial m_{yz}/\partial z (1:0:0)=-b_5=0$. As remarked before, $b_5=0$ implies $b_6\neq 0$. We have $F=(xz+a_4y^2+a_5yz+a_6z^2,y^2+b_6z^2)$, composing with $(p-a_4q,q)$ and $(x-a_5y-(a_6b_4^{-1}-a_4)z,y,b_6^{-1/2}z)$ we obtain $F=(xz,y^2+z^2)$. Permuting the variables we write it down as
\begin{equation}\label{FF_gen2}
\tl{F}_2=(x^2+z^2,yz).
\end{equation}

Now assume that $C(F)$ is a triple point. We may assume that it is $(1:0:0)$ and that $V(y)$ is tangent to $C(F)$ at $(1:0:0)$. We may pick two points at which $F$ is well defined and by composing with linear transformations obtain $F(0:1:0)=(0:1)$ and $F(1:1:0)=(1:0)$. We obtain $a_4=0$, $b_4\neq 0$ and $a_1+a_2\neq 0$. As with the double point, from $(1:0:0)\in C(F)$ we obtain that the vectors $[a_1,a_2,a_3]$ and $[b_1,b_2,b_3]$ are proportional. Thus we may assume that $b_1=b_2=b_3=0$. From the tangency of $V(y)$ at $(1:0:0)$ w obtain $a_1=0$, consequently $a_2\neq 0$. Multiplying $f$ and $g$ by constants we obtain $a_2=1$ and $b_4=1$. Thus, we have $F=(xy+a_3xz+a_5yz+a_6 z^2, y^2+b_5yz+b_6 z^2)$. Composing with $(x-a_5z,y-b_5/2z,z)$ we obtain $F=(xy+a_3xz+a_6 z^2, y^2+b_6 z^2)$. Since $V(y)$ is not a component of $C(F)$ we have $b_6\neq 0$, dividing $z$ by $i\sqrt{b_6}$ we obtain $F=(xy+a_3xz+a_6 z^2, y^2 - z^2)$. Now $C(F)=V(y^2+a_3yz,yz+a_3z^2,xz+a_3xy+2a_6yz)$, so if $a_3^2\neq 1$ then $C(F)$ contains $(1:0:0)$ and $(2a_3a_6/(1-a_3^2):-a_3:1)$ which is a contradiction. If $a_3=-1$ then we multiply $z$ by $-1$ and obtain $F=(xy+xz+a_6 z^2, y^2 - z^2)$. If $a_6=0$ then $y+z$ divides $f$ and $g$, consequently $V(y+z)\in C(F)$ which is a contradiction. So $a_6\neq 0$ and composing with $(a_6x,y,z)$ and $(a_6^{-1}p,q)$ we obtain $F=(xy+xz+z^2, y^2 - z^2)$. This is already a good form, however we prefer to compose it with $(-2z,-y-z,x)$ and obtain
\begin{equation}\label{FF_gen4}
\tl{F}_4=(x^2+2yz,y^2+2xy).
\end{equation}

Now we pass to the case when $C(F)$ does contain the line $V(y)$, but is not the whole plane. If $C(F)$ contains a point outside $V(y)$, then it contains three noncollinear points. According to the analysis above it means that $F$ is equivalent to $(f,z^2)$, in particular to $\tl{F}_3$ if $C(F)$ is a line and a point or $\tl{F}_6$ if $C(F)$ is two lines. It remains to examine the case when $C(F)$ is precisely the line $V(y)$ (as a set). First assume that $F$ is well defined at some point of $C(F)$. We may additionally pick a point outside $C(F)$ for which $F$ is well defined. By composing linear transformations we may assume that $F(1:0:0)=(1:0)$ and $F(0:1:0)=(0:1)$. It follows that $a_1,b_4\neq 0$ and $a_4=b_1=0$ and we may assume that $a_1=b_4=1$. We have $C(F)=V(y)$, so $y$ must divide $m_{xy}$, $m_{xz}$ and $m_{yz}$. The resulting equations yield $b_2=b_3=b_5=b_6=0$, so $g=y^2$. The only mapping with the required geometry is $\tl{F}_5$.

Now assume that $F$ is not well defined along $V(y)$, i.e., $f=yl$ and $g=yk$ for linear forms $l$ and $k$. If $y$, $l$ and $k$ are linearly dependent then $F$ is equivalent to $\tl{F}_8$. If $y$, $l$ and $k$ are linearly independent then $F$ is equivalent to
\begin{equation}\label{FF_gen7}
\tl{F}_7=(xy,yz).
\end{equation}

We sum up the results obtained above in the following corollary:

\begin{corollary}
Let $F=(f,g):\C^3\rightarrow\C^2$ be a mapping with components homogeneous of degree $2$. If $C(F)\neq \C^3$ then the linear equivalence class of $F$ is determined by the critical set $\tl{C}(F)$ of the corresponding $\PP^2\rightarrow\PP^1$ mapping. We have the following possibilities:
\begin{enumerate}
\item $\tl{F}_1=(x^2+z^2,y^2+z^2)$, if $\tl{C}(F)$ is three noncollinear points.
\item $\tl{F}_2=(x^2+z^2,yz)$, if $\tl{C}(F)$ is a double and a smooth point.
\item $\tl{F}_3=(x^2+y^2,z^2)$, if $\tl{C}(F)$ is a line and a point.
\item $\tl{F}_4=(x^2+2yz,y^2+2xy)$, if $\tl{C}(F)$ is a triple point.
\item $\tl{F}_5=(x^2+2yz,z^2)$, if $\tl{C}(F)$ is a line with a nonreduced point.
\item $\tl{F}_6=(x^2,y^2)$, if $\tl{C}(F)$ is two lines.
\item $\tl{F}_7=(xy,yz)$, if $\tl{C}(F)$ is a line.
\item $\tl{F}_8=(xy,y^2)$, if $\tl{C}(F)$ is a double line.
\end{enumerate}
\end{corollary}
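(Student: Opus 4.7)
The proof is a systematic case analysis on the scheme structure of the projective critical locus $\tl{C}(F)\subset\PP^2$, which is the common zero locus of the three quadrics $\tl{m}_{xy},\tl{m}_{xz},\tl{m}_{yz}$. My plan is to first enumerate all possible configurations of $\tl{C}(F)$ up to the action of $\mathrm{PGL}_3(\C)$, and then, for each configuration, to normalize the remaining coefficients of $F$ using the action of $\mathrm{GL}_3(\C)\times\mathrm{GL}_2(\C)$ on source and target, showing that a unique representative from the displayed list emerges.

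For the enumeration I would exploit the linear syzygy between the three Jacobian minors, which restricts the possible intersection patterns of the three quadrics. The resulting shapes split into three families: (A) zero-dimensional $\tl{C}(F)$ (three noncollinear points, a simple plus a double point, or a triple point); (B) $\tl{C}(F)$ contains a line but is not purely one-dimensional (line plus point, line plus embedded point, two lines); and (C) $\tl{C}(F)$ is a single line or a double line. For family (A) with three distinct points, I would first argue that if these points were collinear then each defining quadric would necessarily contain the line through them by B\'ezout's theorem, sending us to family (B); otherwise I would place the points at $(1:0:0),(0:1:0),(0:0:1)$, normalize in the target so that $F$ sends them to standard positions (assuming $F$ is birationally well-defined there, with the exceptional cases reducing to $F=(f,z^2)$ or $F=(f,0)$), and then read off the coefficients from the vanishing conditions $m_{ij}(p)=0$ at each critical point to obtain $\tl{F}_1$. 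The double-point and triple-point subcases are handled similarly, using higher-order vanishing along a prescribed tangent direction to produce $\tl{F}_2$ and $\tl{F}_4$ respectively.

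For families (B) and (C) my plan is to reduce to the already-dispatched case $F=(f,z^2)$: whenever $\tl{C}(F)$ contains a line on which $F$ is well-defined, the component of $F$ transverse to that direction acquires a square factor, so in suitable coordinates $g=z^2$, and the analysis of $f$ then branches on whether $z$ divides $f$, on whether the quadratic form in the remaining variables is nondegenerate, and on the rank of the resulting pencil, producing $\tl{F}_3$, $\tl{F}_5$, $\tl{F}_6$. The remaining subcase, in which $F$ fails to be birationally defined along an entire line, forces $f$ and $g$ to share a common linear factor, giving $\tl{F}_7$ and $\tl{F}_8$ according to the dimension of the span of the cofactor linear forms.

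The main obstacle is exhaustiveness: ruling out combinations of degeneracies that do not fit the listed configurations and confirming that each listed case is not secretly linearly equivalent to another. I would discharge the first point by tracking the syzygy among the $\tl{m}_{ij}$ through each reduction step, and the second by computing $\tl{C}(\tl{F}_k)$ directly for $k=1,\ldots,8$ and verifying that the eight resulting scheme types are pairwise distinguishable under $\mathrm{PGL}_3(\C)$, so that no two normal forms on the list can coincide.
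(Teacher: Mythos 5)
Your proposal follows essentially the same route as the paper: a case analysis on the scheme structure of $\tl{C}(F)\subset\PP^2$, using B\'ezout to push the collinear three-point case into the ``contains a line'' family, normalizing the critical points to the coordinate points and reading off coefficients from the vanishing of the minors, reducing the cases where the birational map is undefined at critical points to $(f,z^2)$ or $(f,0)$, and handling the line cases by the same dichotomy on whether $F$ is defined along the line. The only cosmetic difference is that you invoke the syzygy among the $\tl{m}_{ij}$ explicitly where the paper just observes that $V(\tl{m}_{yz})$ passes through three of the four intersection points of the other two quadrics; the substance is the same.
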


Next, we enumerate $F=(f,g)$ such that $f$ is of degree $2$ and $g$ is of a lower degree. It is a classic result that the quadratic form $f$ is equivalent to $x^2+y^2+z^2$, $x^2+y^2$ or $x^2$.

First consider the case where $f=x^2+y^2+z^2$ and $g$ is a linear form. Then $V(f,g)$ is either two points or a double point in $\PP^2$. It is easy to see that in the former case $F$ is equivalent to 
\begin{equation}\label{FF_gen9}
\tl{F}_9=(x^2+yz,x)
\end{equation}
and in the latter case $F$ is equivalent to
\begin{equation}\label{FF_gen10}
\tl{F}_{10}=(x^2+yz,y).
\end{equation}
If $g$ is zero then we simply have
\begin{equation}\label{FF_gen11}
\tl{F}_{11}=(x^2+y^2+z^2,0).
\end{equation}

Next, if $f=x^2+y^2$ and $g$ is a linear form then $V(f)$ is two lines and $V(g)$ a line in $\PP^2$. The line $V(g)$ can meet the lines of $V(f)$ in distinct points or in their intersection point or it can be one of the lines in $V(f)$. Correspondingly we obtain:
\begin{equation}\label{FF_gen12xy}
\tl{F}_{12}=(x^2+y^2,z),\quad \tl{F}_{13}=(x^2+y^2,x),\quad \tl{F}_{14}=(xy,x),\quad \tl{F}_{15}=(x^2+y^2,0).
\end{equation}

If $f=x^2$ then a linear form $g$ can either divide $f$ or not. We obtain:
\begin{equation}\label{FF_gen16xy}
\tl{F}_{16}=(x^2,y),\quad \tl{F}_{17}=(x^2,x),\quad \tl{F}_{18}=(x^2,0).
\end{equation}

Finally, we are left with the simple cases when neither $f$ nor $g$ is of degree $2$. We have:
\begin{equation}\label{FF_gen190}
\tl{F}_{19}=(x,y),\quad \tl{F}_{20}=(x,0),\quad \tl{F}_{21}=(0,0).
\end{equation}

We sum up the results obtained above in the following corollary:

\begin{corollary}
Let $F=(f,g):\C^3\rightarrow\C^2$ be a mapping with $f$ homogeneous of degree $2$ and $g$ homogeneous of degree less than $2$. Then $F$ is linearly equivalent to one of the following:
\begin{enumerate}
\item $\tl{F}_9=(x^2+yz,x)$, $\tl{F}_{10}=(x^2+yz,y)$, $\tl{F}_{12}=(x^2+y^2,z)$, if $C(F)$ is a line.
\item $\tl{F}_{13}=(x^2+y^2,x)$, $\tl{F}_{14}=(xy,x)$, $\tl{F}_{16}=(x^2,y)$, if $C(F)$ is a plane.
\item $\tl{F}_{11}=(x^2+y^2+z^2,0)$, $\tl{F}_{15}=(x^2+y^2,0)$, $\tl{F}_{17}=(x^2,x)$, $\tl{F}_{18}=(x^2,0)$, $\tl{F}_{20}=(x,0)$, $\tl{F}_{21}=(0,0)$, if $C(F)$ is $\C^3$.
\item $\tl{F}_{19}=(x,y)$, if $C(F)$ is empty.
\end{enumerate}
\end{corollary}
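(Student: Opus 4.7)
The plan is to argue that the corollary is essentially a repackaging of the case analysis carried out in equations \eqref{FF_gen9}--\eqref{FF_gen190}, organised by the geometric type of the critical set. I would therefore proceed in three stages: normal form for $f$, normal form for $g$ given $f$, and computation of $C(F)$ in each resulting model.

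First I would invoke the classical complex classification of quadratic forms to reduce $f$, after a linear change of source coordinates, to exactly one of $x^2+y^2+z^2$, $x^2+y^2$, or $x^2$, according to its rank. In parallel, $g$ is either zero or a nonzero linear form. The trivial $g=0$ branch already gives $\tl{F}_{11}$, $\tl{F}_{15}$, and $\tl{F}_{18}$ directly, and an obvious argument covers the remaining linear cases $\tl{F}_{19}$, $\tl{F}_{20}$, $\tl{F}_{21}$ when $f$ itself has degree less than $2$.

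When $g$ is a nonzero linear form, I would exploit the stabiliser of $f$ (up to scalar) in $\mathrm{GL}_3(\C)$ to put $g$ into a canonical position depending on its geometric relation to $f$. For $f=x^2+y^2+z^2$, nondegeneracy of $f$ means $V(g)$ meets the conic $V(f)$ in either two distinct points or a double point, giving $\tl{F}_9$ and $\tl{F}_{10}$ respectively. For $f=x^2+y^2=(x+iy)(x-iy)$, the relevant dichotomy is whether the line $V(g)$ (i) misses the apex $V(x,y)$ of the pair of lines $V(f)$, (ii) passes through the apex but is neither of the two lines, or (iii) coincides with one of the two lines; these three cases yield $\tl{F}_{12}$, $\tl{F}_{13}$, and $\tl{F}_{14}$ respectively. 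For $f=x^2$, either $g$ is proportional to $x$ or it is independent of $x$, giving $\tl{F}_{17}$ and $\tl{F}_{16}$.

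Finally, I would compute $C(F)=V(m_{xy},m_{xz},m_{yz})$ for each of the $13$ models. For example, for $\tl{F}_9$ one gets $m_{xy}=-z$, $m_{xz}=-y$, $m_{yz}=0$, so $C(\tl{F}_9)=V(y,z)$ is a line; similar direct computations place $\tl{F}_{10}$ and $\tl{F}_{12}$ in the same class, $\tl{F}_{13}$, $\tl{F}_{14}$, $\tl{F}_{16}$ in the plane class, and verify that the remaining six have $C(F)=\C^3$ or $\emptyset$ as claimed. I expect the only mildly delicate part to be keeping track, for $f=x^2+y^2$, of the three subcases of how $V(g)$ can sit with respect to $V(f)$; everything else is routine once the normal forms of $f$ and $g$ have been fixed, and the grouping in the statement is then just a matter of reading off the critical set from the computed Jacobian minors.
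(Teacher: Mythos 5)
Your proposal follows essentially the same route as the paper: reduce $f$ to one of $x^2+y^2+z^2$, $x^2+y^2$, $x^2$ by the classification of quadratic forms, then classify the nonzero linear form $g$ by the position of $V(g)$ relative to $V(f)$ in $\PP^2$ (secant vs.\ tangent to the conic; missing the apex, through the apex, or a component of the line pair; dividing $x^2$ or not), exactly as in the text preceding the corollary. Your added verification of $C(F)$ for each normal form is correct and matches the grouping in the statement, so the proposal is sound.
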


\section{Generic behavior at infinity}\label{sec_geninf}

In this section we enumerate the linear and topological equivalence classes of mappings $F$ having general behavior at infinity, i.e., with $\tl{F}=\tl{F}_1$. There are infinitely many linear equivalence classes, the families of topologically equivalent mappings have up to $2$ parameters.

We have
\begin{equation}\label{F_gen00}
F=(x^2+z^2+a_7x+a_8y+a_9z+a_{10},y^2+z^2+b_7x+b_8y+b_9z+b_{10}).
\end{equation}

By composing $F$ with the translation $(x-a_7/2,y-b_8/2,z-a_9/2)$ we may assume that
$$F=(x^2+z^2+a_8y+a_{10},y^2+z^2+b_7x+b_9z+b_{10}).$$

Next, we use the translation $(p-a_{10},q-b_{10})$ on $\C^2$  and obtain
\begin{equation}\label{F_gen0}
F=(x^2+z^2+a_8y,y^2+z^2+b_7x+b_9z).
\end{equation}

Now we have two cases: the generic case when $a_8\neq 0$ and the nongeneric case when $a_8=0$. We will investigate the nongeneric case later and assume now that $a_8\neq 0$. Then we can multiply $x$, $y$ and $z$ by $a_8$ and divide $f$ and $g$ by $a_8^2$ obtaining
\begin{equation}\label{F_gen1}
F=(x^2+z^2+y,y^2+z^2+\alpha x+\beta z),\quad \alpha,\beta\in\C.
\end{equation}

Note that the space $\Omega_{\C^3}(2,2)$ has dimension $20$, whereas the spaces of affine automorphisms on $\C^3$ and $\C^2$ have dimensions $12$ and $6$, respectively. Thus $20-12-6=2$ is the smallest number of parameters we can obtain by simplifying a generic member of $\Omega_{\C^3}(2,2)$ with affine automorphisms.

Again, we will first investigate the generic case when $\alpha,\beta\neq 0$ and leave the nongeneric case $\alpha\beta=0$ for later.

Now let us take $A=\alpha^{-2}$ and $B=\beta^2\alpha^{-2}$, and compose $F$ with $\left(\alpha x/2, y/2, \beta z/2 \right)$ and $(4\alpha^{-2} p, 4\alpha^{-2} q)$. We obtain
\begin{equation}\label{F_gen1a}
F=(x^2+Bz^2+2Ay,Ay^2+Bz^2+2x+2Bz),\quad A,B\in\C^*.
\end{equation}

Observe that $C(F)=V(xy-1,x(z+1)-z,yz-(z+1))$ is a smooth curve of degree $3$. It can be easily parametrized by
\begin{equation}\label{phi}
\varphi:\C\setminus\{0,-1\}\ni t\mapsto\left(\frac{t}{t+1},\frac{t+1}{t},t\right)\in C(F).
\end{equation}

The critical set of $F\circ\varphi$ is equal to the set $V((f\circ\varphi)',(g\circ\varphi)')$ which is equal to the set of zeroes of the derivative of $f\circ\varphi$ and set of zeroes of the polynomial
\begin{equation}\label{Hc}
H_c=Bt^3(t+1)^3+t^3-A(t+1)^3.
\end{equation}

The computation of the selfintersections of $\Delta(F)$ requires some tedious calculations, so we use the computational algebra system Magma \cite{mag} to compute that the set $\{t\in\C\setminus\{0,-1\}: \exists t_1\in\C\setminus\{0,-1\}: t\neq t_1,\ F\circ\varphi(t)=F\circ\varphi(t_1)\}$ is contained in the set of zeroes of the polynomial
\begin{equation}\label{Hn}
H_n=(t+1)^4(Bt^2+A)^2+2t^2(t+1)^2(Bt^2-A)+t^4.
\end{equation}

For generic values of $A$ and $B$ the polynomials $H_c$ and $H_n$ have neither multiple roots nor common roots. In that case $\Delta(F)$ has $6$ cusps at $F\circ\varphi(V(H_c))$ and $4$ nodes at $F\circ\varphi(V(H_n))$ and no other singularities. We use Magma to compute relevant resultants and determine that $H_c$ has a multiple root if and only if $H_n$ has a multiple root if and only if $H_c$ and $H_n$ have a common root if and only if $(A,B)$ is a zero of
\begin{equation}\label{F_mult1}
H_0=(A+B)^4+(A-1)^4+(B+1)^4-A^4-B^4-1+124AB(A-B+1).
\end{equation}

Note that the resultant of $H_c$ and $H_c'$ is actually equal to $-729A^2B^3H_0$, however $AB\neq 0$. Similarly, for $H_n$ and $H_n'$ we obtain $16777216A^8B^{10}H_0$ and for $H_c$ and $H_n$ we obtain $A^4B^4H_0^2$.

Now, substituting $(A,B)$ for $\left(\alpha^{-2},\beta^2\alpha^{-2}\right)$ in equation \eqref{F_mult1} and multiplying the equation by $\alpha^8$ we obtain
\begin{equation}\label{F_mult1a}
H_1=(\alpha^2+\beta^2)^4+(\alpha^2-1)^4+(\beta^2+1)^4-\alpha^8-\beta^8-1+124\alpha^2\beta^2(\alpha^2-\beta^2+1).
\end{equation}

Observe that any $F=(x^2+z^2+y,y^2+z^2+\alpha x+\beta z)$ for $\alpha\beta\neq 0$ is affinely equivalent to $(x^2+Bz^2+2Ay,Ay^2+Bz^2+2x+2Bz)$ with some $AB\neq 0$. Moreover $H_1(\alpha,\beta)\neq 0$ if and only if $H_0(A,B)\neq 0$. We can now prove the following theorem:

\begin{theorem}\label{th_genF}
Let $F=(x^2+z^2+y,y^2+z^2+\alpha x+\beta z)$ for $\alpha\beta\neq 0$ and $H_1(\alpha,\beta)\neq 0$. Then $F$ is topologically equivalent to $F_1=(x^2+z^2+y,y^2+z^2+x+z)$.
\end{theorem}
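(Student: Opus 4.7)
The plan is to connect $(\alpha,\beta)$ to $(1,1)$ through a path inside the parameter set
\[
U := \{(\alpha,\beta) \in \C^2 : \alpha\beta \neq 0 \text{ and } H_1(\alpha,\beta) \neq 0\}
\]
and argue that the topological type of $F$ is constant along such a path. The set $U$ is the complement in $\C^2$ of the algebraic hypersurface $\{\alpha\beta H_1 = 0\}$, hence Zariski open; since its complement has real codimension $2$, $U$ is path-connected in the Euclidean topology. A direct check gives $H_1(1,1)\neq 0$, so $(1,1) \in U$ and one may choose a smooth path $\gamma : [0,1] \to U$ joining $(\alpha,\beta)$ to $(1,1)$.

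Next, I would verify that for every $s = (\alpha,\beta) \in U$ the mapping $F_s$ has the same singular picture. The analysis carried out above in this section shows: (i) $C(F_s)$ is the smooth rational cubic parametrized by $\varphi$ in \eqref{phi}; (ii) the cusp polynomial $H_c$ has $6$ distinct simple roots and the node polynomial $H_n$ has $8$ distinct simple roots, no two of which coincide, because the discriminants $\mathrm{res}(H_c,H_c')$, $\mathrm{res}(H_n,H_n')$ and $\mathrm{res}(H_c,H_n)$ are all nonzero multiples of $H_0$, and $H_0(A,B)\neq 0$ iff $H_1(\alpha,\beta)\neq 0$; (iii) accordingly $\Delta(F_s)$ has exactly $6$ cusps and $4$ nodes and no other singularities; (iv) the homogeneous leading part $\tl{F}_s = \tl{F}_1$ is the same for all $s$, so the behavior at infinity does not depend on~$s$.

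Items (i)--(iv) say that the family $\{F_{\gamma(t)}\}_{t \in [0,1]}$ is equisingular: the critical set, the discriminant, and the positions of their cusp and node strata vary smoothly with $t$ without collisions or jumps, and the asymptotic structure at infinity is constant. Applying Thom's second isotopy lemma to a Whitney stratification of source and target compatible with $C(F_t)$, $\Delta(F_t)$ and the cusp/node subsets produces continuous families of homeomorphisms $\Phi_t : \C^3 \to \C^3$ and $\Psi_t : \C^2 \to \C^2$ with $\Phi_0 = \id$, $\Psi_0 = \id$ and $F_{\gamma(t)} = \Psi_t^{-1} \circ F_{\gamma(0)} \circ \Phi_t$. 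Setting $t = 1$ delivers the desired topological equivalence between $F$ and $F_1$.

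The main obstacle I anticipate is ensuring that the Thom--Mather machinery applies globally on $\C^3$, not only over compact subsets. This is precisely where the invariance of $\tl{F}_s$ is essential: after resolving the indeterminacies of the rational map $\tl{F}_1 : \PP^2 \dashrightarrow \PP^1$ one obtains a proper extension of the whole family to a suitable compactification of $\C^3 \to \C^2$. This furnishes the properness condition needed in Thom's isotopy lemma and lets one promote the local topological trivialization to a global one, completing the argument.
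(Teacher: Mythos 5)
Your overall strategy coincides with the paper's: reduce to showing that the family over the connected parameter set $U$ (equivalently, over $\{ABH_0(A,B)\neq 0\}$ after the normalization of equation \eqref{F_gen1a}) is topologically trivial, using the fact that the number and type of singularities ($6$ cusps, $4$ nodes, smooth cubic critical curve, fixed leading form $\tl{F}_1$) is constant. However, there is a genuine gap at exactly the point you flag as the "main obstacle." Constancy of the affine singularity data does \emph{not} by itself imply topological triviality of a family of non-proper polynomial maps $\C^3\to\C^2$: the topological type can jump because of critical phenomena at infinity even when the cusp and node counts are constant. Your proposed remedy --- resolve the indeterminacies of $\tl{F}_1:\PP^2\dashrightarrow\PP^1$ and invoke Thom's second isotopy lemma for the resulting proper extension --- is not a proof. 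A proper extension to a compactification always exists, but the second isotopy lemma additionally requires a Whitney stratification including the boundary divisor for which the extended family is a stratified submersion over the parameter space satisfying the Thom $a_f$ condition; whether such a stratification exists (i.e.\ whether the family is equisingular at infinity) is precisely the nontrivial content, and you do not verify it.

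The paper's proof spends most of its effort on exactly this issue. Following the non-proper Thom isotopy framework of Dinh--Jelonek, it constructs rugose vector fields lifting a constant field on the parameter space and then, in Steps 4 and 5, proves explicit growth bounds $\|w_0(P)\|\leq C(\|P\|+1)$ and $\|v_0(P)\|\leq C(\|P\|+1)$ so that the integral curves do not escape to infinity in finite time (Gronwall). The substance of Step 4 is showing that $\pi|_{Y_2}$ has no asymptotic critical values, via a computation of Jelonek's $g'$ function along sequences tending to the three points at infinity of $\Delta(F_{A,B})$ and of $C(F_{A,B})$; Step 5 performs an analogous estimate on the source by analyzing the matrix $Mm(P_n)$ at the points at infinity of the fibers. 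Some computation of this kind is unavoidable, and your write-up contains none of it. Secondary, smaller omissions: the gluing of local lifts near cusps and nodes requires the normal forms $(x,y^3+xy+z^2)$ and the node multisingularity together with a partition of unity (the paper's Step 3), and the lift on the fold stratum $Y_2$ must be shown to exist with linear growth --- these are routine once the infinity issue is handled, but they are part of making ``apply the isotopy lemma'' into an actual argument.
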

\begin{proof}
\textbf{Step 1.} Scheme of the proof.

By the argument above it is sufficient to show that the family of mappings $F_{A,B}=(x^2+Bz^2+2Ay,Ay^2+Bz^2+2x+2Bz)$, for $ABH_0(A,B)\neq 0$, is topologically trivial. Let $V=\C^2\setminus V(ABH_0(A,B))$. Since $V$ is connected it is enough to show local topological triviality. Let $\mathcal{F}=(F_{A,B},\id_V):\C^3\times V\rightarrow \C^2\times V$.

We will loosely follow the methods developed in Section 3 of \cite{dj}. We pass to the real structure to construct and integrate real vector fields.  To keep the notation brief we will use the complex coordinates when possible. We identify a complex point $(x_1,\ldots,x_k)$ with the real point $(\Ree x_1,\Imm x_1,\ldots,\Ree x_k,\Imm x_k)$ and a complex mapping $(f_1,\ldots,f_k)$ with a real mapping $(\Ree f_1,\Imm f_1,\ldots,\Ree f_k,\Imm f_k)$.

Fix some $(A_0,B_0)\in V$ and let $K$ be a small closed ball centered at $(A_0,B_0)$ and contained in $V$.
Take $(A,B)\in K\setminus (A_0,B_0)$ and let $\partial_0$ be the constant vector field on $K$ with value $(A-A_0,B-B_0)/||(A-A_0,B-B_0)||$. Let $\gamma:[0,\varepsilon]\rightarrow K$ be the unique integral curve of $\partial_0$ with $\gamma(0)=(A_0,B_0)$. Obviously $\gamma(t_1)=(A,B)$ for $t_1=||(A-A_0,B-B_0)||$. Let $\pi$ be the projection from $\C^2\times K$ to $K$. We will construct vector fields $w_0$ on $\C^2\times K$ and $v_0$ on $\C^3\times K$ such that $d\pi(w_0)=\partial_0$ and $d\mathcal{F}(v_0)=w_0$. Note that we treat here $\pi$ as a restriction of projection $\R^8\rightarrow\R^4$ and $\mathcal{F}$ as a $\R^{10}\rightarrow\R^8$ mapping. If we construct the vector fields correctly, then for a point $(p_0,q_0)\in\C^2$ there will be $\gamma_1$ -- the unique integral curve of $w_0$ such that $\gamma_1(0)=(p_0,q_0,A_0,B_0)$. Then, $\gamma_1(t_1)=(p_1,q_1,A,B)$ for some $(p_1,q_1)\in\C^2$. We define a homeomorphism $\Psi_{A,B}:\C^2\rightarrow \C^2$ by setting $\Psi_{A,B}(p_0,q_0)=(p_1,q_1)$. Similarly, we define a homeomorphism $\Phi_{A,B}:\C^3\rightarrow \C^3$ by setting $\Phi_{A,B}(x_0,y_0,z_0)=(x_1,y_1,z_1)$, where $(x_1,y_1,z_1,A,B)=\gamma_2(t_1)$ and $\gamma_2$ is the unique integral curve of $v_0$ with $\gamma_2(0)=(x_0,y_0,z_0,A_0,B_0)$. By construction we have $F_{A,B}\circ\Phi_{A,B}=\Psi_{A,B}\circ F_{A_0,B_0}$.

There are two conditions that we will need $v_0$ and $w_0$ to satisfy. Firstly, the vector fields will be rugose, this implies that $\gamma_1$ and $\gamma_2$ are unique and $\Phi_{A,B}$ and $\Psi_{A,B}$ are homeomorphisms (see the proof of \cite[Lemma 3.1]{dj}). Secondly, we require that $||w_0(P)||\leq C(||P||+1)$ for all $P\in \C^2\times K$. This condition allows to use the Gronwall Lemma to ensure that $\gamma_1$ does not escape to infinity and is well defined for $t_1$ (see \cite[Claim 3.1]{dj}). Similarly, we need $||v_0(P)||\leq C(||P||+1)$ for certain points that we will expose later.

\textbf{Step 2.} Stratification and rugosity.

We define the stratification
$$\mathcal{S}=\{Y_1,Y_2,Y_3\}=\{\C^2\times V\setminus \Delta(\mathcal{F}), \Delta(\mathcal{F})\setminus \Sing(\Delta(\mathcal{F})), \Sing(\Delta(\mathcal{F}))\}.$$

We refer the reader to  \cite[Subsection 2.1]{dj} for definition and properties of Whitney stratifications. Observe that $\mathcal{S}$ is a Whitney stratification. Moreover, $(p,q,A,B)\in \Delta(\mathcal{F})$ if and only if $(p,q)\in\Delta(F_{A,B})$ and $(p,q,A,B)\in \Sing(\Delta(\mathcal{F}))$ if and only if $(p,q)\in\Sing(\Delta(F_{A,B}))$. See \cite[Lemma 3.5]{fj} for the proof of an analogous fact for $\C^2\rightarrow\C^2$ mappings, the proof in our case is very similar.

We also define a Whitney stratification on $\C^3\times V$:
$$\mathcal{S}'=\{X_1,X_2,X_3,X_4,X_5\}=\{\mathcal{F}^{-1}(Y_1),\mathcal{F}^{-1}(Y_2)\cap C(\mathcal{F}), \mathcal{F}^{-1}(Y_3)\cap C(\mathcal{F})$$
$$\mathcal{F}^{-1}(Y_2)\setminus C(\mathcal{F}), \mathcal{F}^{-1}(Y_3)\setminus C(\mathcal{F})\}.$$

Note that $(x,y,z,A,B)\in C(\mathcal{F})$ if and only if $(x,y,z)\in C(F_{A,B})$ and $(x,y,z,A,B)\in X_3$ if and only if $(x,y,z)\in C(F_{A,B})$ and $F_{A,B}(x,y,z)\in\Sing(\Delta(F_{A,B}))$.

Let us now recall the definition of a rugose vector field. Let $X\subset\C^n$ be a variety with stratification $\{X_i\}$, let $\phi:X\rightarrow\R$ be a real function. We say that $\phi$ is a \emph{rugose function} if the following conditions are fulfilled:
\begin{itemize}
\item The restriction $\phi|_{X_i}$ to any stratum $X_i$ is a smooth function.
\item For any stratum $X_i$ and for any $x\in X_i$ there exists a neighborhood $U$ of $x$ in $\C^n$ and a constant $C>0$ such that for any $y\in X\cap U$ and any $x_1\in X_i\cap U$ we have $|\phi(y)-\phi(x_1)|\leq C||y-x_1||$.
\end{itemize}
A \emph{rugose map} is a map whose components are rugose functions. A vector field $v$ on $X$ is called a rugose vector field if $v$ is a rugose map and $v(x)$ is tangent to the stratum containing $x$ for any $x\in X$.

The stratification $\mathcal{S}$ induces a Whitney stratification on $\C^2\times K$ consisting of the restricted strata. We will construct the vector field $w_0$ so that it will be rugose with respect to the stratification induced by $\mathcal{S}$. For an open subset $U\subset \C^2\times V$ by a rugose vector field we will mean a vector field rugose with respect to the stratification $\{Y_i\cap U\}$. For a biholomorphic mapping $\phi: U\mapsto U'$ we obtain the Whitney stratification $\{\phi(Y_i\cap U)\}$ of $U'$ which we use to define rugose vector fields on $U'$. In this setting, if we construct a rugose vector field on $U'$ then by lifting it to $U$ via $\phi$ we obtain a rugose vector field on $U$. Similarly, the vector field $v_0$ will be rugose with respect to the stratification induced by $\mathcal{S}'$.

\textbf{Step 3.} Constructing the vector fields.

We will construct the vector fields $w_0$ and $v_0$ in several parts and glue them together using a smooth Urysohn's Lemma and smooth partition of unity.

Let $Q_1=(0,0,0,A_0,B_0)$ and $Q_2=(0,0,A_0,B_0)$.

We begin with $X_3$ and $Y_3$. Take a point $P_1=(x_1,y_1,z_1,A_0,B_0)\in X_3$, then $(x_1,y_1,z_1)$ is either a cusp or a node of $F_{A_0,B_0}$. First assume that it is a cusp, so $F_{A_0,B_0}|_{C(F_{A_0,B_0})}$ is not an immersion at $(x_1,y_1,z_1)$. We can express the cusp singularity in its normal form, i.e., we can find neighborhoods $U_1$ of $P_1$ and $U_2$ of $\mathcal{F}(P_1)$, open sets $V_1\subset\C^5$, and $V_2\subset\C^4$ and biholomorphic mappings $h_1:(U_1,P_1)\rightarrow(V_1, Q_1)$, $h_2(U_2,\mathcal{F}(P_1))\rightarrow(V_2, Q_2)$ such that $\mathcal{F}|_{U_1}=h_2^{-1}\circ \mathcal{F}_1\circ h_1$ for $\mathcal{F}_1(x,y,z,A,B)=(x,y^3+xy+z^2,A,B)$. Moreover, the mappings $h_1$, $h_2$ are identity on the $A$ and $B$ coordinates. Thus we obtain the following commutative diagram:  

$$
\begin{tikzcd}
(U_1,P_1) \arrow{r}{\mathcal{F}} \arrow[swap]{d}{h_1} & (U_2,\mathcal{F}(P_1)) \arrow{r}{\pi} \arrow{d}{h_2} & (\C^2, (A_0,B_0)) \arrow{d}{\id}\\
(V_1, Q_1) \arrow{r}{\mathcal{F}_1}& (V_2, Q_2) \arrow{r}{\pi}& (\C^2, (A_0,B_0))
\end{tikzcd}
$$

The construction above was executed over the complex numbers, but now we pass to the real structure with all mappings being analytic. We lift $\partial_0$ trivially (by putting zeroes on first $4$ or $6$ coordinates) to $V_2\cap \C^2\times K$ and $V_1\cap \C^3\times K$. Then we lift via $h_1$ to obtain the vector field $v_1$ on $U_1\cap \C^3\times K$ and via $h_2$ to obtain $w_1$ on $U_2\cap \C^2\times K$. Note that $v_1$ and $w_1$ are rugose, since they are obtained by lifting a constant vector field. Moreover, by taking $K$ small enough we can assume that $U_1$ contains the connected component of $X_3\cap \C^3\times K$ containing $P_1$ and $U_2$ contains the connected component of $Y_3\cap \C^2\times K$ containing $\mathcal{F}(P_1)$.

Now, if $P_3=(x_3,y_3,z_3,A_0,B_0)\in X_3$ is not a cusp of $F_{A_0,B_0}$, then it is a node of $F_{A_0,B_0}$. This means that there is a unique $P_4\in X_3$ such that $\mathcal{F}(P_3)=\mathcal{F}(P_4)$. Again, we will use the normal form of the node multisingularity to lift $\partial_0$. We have biholomorphic mappings $h_3:(U_3,P_3)\rightarrow(V_3, Q_1)$, $h_4:(U_4,P_4)\rightarrow(V_4, Q_1)$, $h_5:(U_5,\mathcal{F}(P_3))\rightarrow(V_5, Q_2)$ such that $\mathcal{F}|_{U_3}=h_5^{-1}\circ \mathcal{F}_3\circ h_3$ for $\mathcal{F}_3(x,y,z,A,B)=(x,y^2+z^2,A,B)$ and $\mathcal{F}|_{U_4}=h_5^{-1}\circ \mathcal{F}_4\circ h_4$ for $\mathcal{F}_4(x,y,z,A,B)=(y^2+z^2,x,A,B)$.

Since $F_{A_0,B_0}$ has only $6$ cusps and $4$ nodes we will shrink $K$ only finitely many times. By shrinking the obtained neighborhoods connected components of $X_3\cap \C^3\times K$ we can assume that they are disjoint. So we can glue the various parts together to obtain the vector field $v_1$ on a set $U_6$ open in $\C^3\times K$ and containing $X_3\cap \C^3\times K$. Similarly, we obtain the vector field $w_1$ on a set $U_7$. Since $Y_3\cap \C^2\times K$ is compact we may assume that there is a constant $C$ such that $||w_1(P)||\leq C$ for all $P\in U_7$. We may need to  shrink $U_6$ and $U_7$ further to maintain the condition $\mathcal{F}(U_6)=U_7$.

Now consider $X_2$ and $Y_2$. We lift $\partial_0$ to $Y_2$ using the horizontal lift, i.e., for $P\in Y_2$ we set $w_2(P)$ to be the unique vector in $T_P Y_2$ which lifts $\partial_0$ and is orthogonal to $\ker d_P(\pi|_{Y_2})$. We need to verify that there is a constant $C$ such that for all $P\in Y_2\cap \C^2\times K$ we have $||w_2(P)||\leq C(||x||+1)$, we will do that in Step 4. Since $\mathcal{F}|_{X_2}$ is an isomorphism on $Y_2$ there is only one way that we can lift the vector field $w_2$ on $Y_2$ to $v_2$ on $X_2$.

Now we would like to extend $v_2$ and $w_2$ to rugose vector fields on neighborhoods of $X_2$ and $Y_2$, respectively, and combine them with $v_1$ and $w_1$. For any $P\in X_2$ there are open sets $U_{P,1}$, $U_{P,2}$, $U_{P,3}$, $U_{P,4}$ and biholomorphic mappings $h_{P,1}:U_{P,1}\rightarrow U_{P,2}$ and $h_{P,2}:U_{P,3}\rightarrow U_{P,4}$ such that $\mathcal{F}|_{U_{P,1}}=h_{P,2}^{-1}\circ \mathcal{F}_3\circ h_{P,1}$ and $h_{P,1}$ and $h_{P,2}$ are identities in the $A$, $B$ coordinates. Note that $h_{P,1}(X_2)$ is given in $U_{P,2}\subset \C^3\times\C^2$ by equations $y=z=0$. Now we pass to the real structure and lift $v_2$ to $v_2'$ on $h_{P,1}(X_2)$. Since $v_2(Q)\in T_Q X_2$ and $v_2$ is a lift of $\partial_0$ we must have $v_2'=(\alpha,\beta,0,0,0,0,\partial_0)$, where $\alpha$ and $\beta$ are smooth functions in variables $\Ree x$, $\Imm x$, $\Ree A$, $\Imm A$, $\Ree B$, $\Imm B$. Let $\pi_1:\R^6\times\R^4\rightarrow\R^2\times\R^4$ be the projection on first two and last four coordinates. We extend $v_2'$ to $U_{P,2}$ by composing it with $\pi_1$. Obviously, $v_2'\circ \pi_1$ is a rugose vector field, we lift it to a rugose vector field $v_{P,3}$ on $U_{P,1}$ via $h_{P,1}$. In the same way we extend $w_2$ to a rugose vector field $w_{P,3}$ on $U_{P,3}$. Note that thanks to the construction $v_3$ is a lift of $w_3$ via $\mathcal{F}$. Since $||w_2(Q)||\leq C(||Q||+1)$ for any $Q\in Y_2\cap U_{P,3}$, we can shrink $U_{P,1}$ and $U_{P,3}$ so that we retain $U_{P,1}=\mathcal{F}^{-1}U_{P,3}$ and have $||w_{P,3}\mathcal{F}(Q)||\leq 2C(||\mathcal{F}(Q)||+1)$ for any $Q\in U_{P,3}$.

Take a smooth partition of unity $1=\sum_i\varphi_i$ on a neighborhood $U_8$ of $Y_2\cap \C^2\times K$ such that for all $i$ there is a point $P_i$ such that $\varphi_i$ is zero outside $U_{P_i,3}$. We define $w_3=\sum_i\varphi_i\cdot w_{P_i,3}$ and $v_3=\sum_i(\varphi_i\circ\mathcal{F})\cdot v_{P_i,3}$. Note that for each $P\in X_2\cap \C^3\times K$ the value $\varphi_i(\mathcal{F}(P))$ is nonzero for finitely many $i$, so $v_3$ is well defined on the neighborhood $\bigcap\{U_{P_i,1}:\ \varphi_i(\mathcal{F}(P))\neq 0\}$ of $P$. Thus $v_3$ is well defined on a neighborhood $U_9$ of $X_2\cap \C^3\times K$ such that $\mathcal{F}(U_9)=U_8$.

Now, take a smooth function $\varphi_1:U_7\cup U_8\rightarrow[0,1]$ such that $\varphi_1$ is equal $1$ on some neighborhood $U_{10}$ of $Y_3\cap \C^2\times K$ and is equal $0$ outside the neighborhood $U_7$ on which $w_1$ is defined. We define $w_4=\varphi_1 w_1+(1-\varphi_1)w_3$ and $v_4=(\varphi_1\circ\mathcal{F}) v_1+(1-(\varphi_1\circ\mathcal{F}))v_3$. By construction $w_4$ is rugose on the neighborhood $U_8\cup U_{10}$ of $\Delta(\mathcal{F})\cap \C^2\times K$ and $v_4$ is rugose on the neighborhood $U_{11}=(U_6\cap\mathcal{F}^{-1}(U_{10}))\cup U_9$ of $C(\mathcal{F})\cap \C^3\times K$.

Now consider $Y_3$. Let $w_5$ be the horizontal lift of $\partial_0$ to $Y_3$, note that this is a constant vector field $(0,\ldots,0,\partial_0)$. Take a smooth function $\varphi_2:\C^2\times K\rightarrow[0,1]$ such that $\varphi_2$ is equal to $1$ on some neighborhood $U_{12}$ of $\Delta(\mathcal{F})\cap \C^2\times K$ and $0$ outside $U_8\cup U_{10}$. We define $w_0=\varphi_1 w_4+(1-\varphi_1)w_5$. Note that by construction $w_0$ is rugose and $||w_0(P)||\leq 2C(||P||+1)$ for any $P\in\C^2\times K$.

We define $v_5$ to be the horizontal lift of $w_0$ to $(\C^3\setminus\C(\mathcal{F}))\times K=(X_1\cup X_4\cup X_5)\cap \C^3\times K$ via $\mathcal{F}$. Note that $\mathcal{F}(X_1)=Y_1$, $\mathcal{F}(X_4)=Y_2$, $\mathcal{F}(X_5)=Y_3$ so rugosity of $w_0$ implies rugosity of $v_5$. Take a smooth function $\varphi_3:\C^3\times K\rightarrow[0,1]$ such that $\varphi_3$ is equal to $1$ on some neighborhood of $\C(\mathcal{F})\cap \C^3\times K$ and $0$ outside $U_{11}$. We define $v_0=\varphi_3 v_4+(1-\varphi_3)v_5$. Note that by construction $v_0$ is rugose.

\textbf{Step 4.} A bound for $||w_2(P)||$.

Since $Y_2\cap \C^2\times K$ is not bounded we must check for critical points at infinity and ensure that we construct vector fields with no trajectories going to infinity. Here we will rely heavily on the methods from \cite[Section 3]{dj}. We lifted $\partial_0$ to $Y_2$ using the horizontal lift and need to verify that there is a constant $C$ such that for all $P\in Y_2\cap \C^2\times K$ we have $||w_2(P)||\leq C(||x||+1)$. By \cite[Lemma 3.2]{dj} it is enough to show that $(||x||+1)\nu(d_P(\pi|_{Y_2}))\geq \frac{1}{C}$, where $\nu$ is the Rabier function. We can find a suitable constant if and only if $\pi|_{Y_2}$ does not have any asymptotic critical values, i.e., there is no sequence of $P_n\in Y_2$ such that $P_n\rightarrow\infty$, $\pi(P_n)$ is convergent and $||P_n||\nu(d_{P_n}(\pi|_{Y_2}))\rightarrow 0$. Due to \cite[Corollary 2.3]{jel} we can replace $\nu(d_{P_n}(\pi|_{Y_2}))$ with $g'(d_{P_n}(\pi),T_{P_n} Y_2)$, which is much easier to compute. We refer the reader to  \cite[Section 2]{jel} for the construction of the $g'$ function, we will only compute its value in our setting.

Take $P\in Y_2$, to compute the value of $g'(d_{P}(\pi),T_{P} Y_2)$ we will need the equation of the hyperplane $T_P Y_2$ in $T_P \C^4$, we represent the linear equation as a row vector of coefficients. Note $\mathcal{F}|_{X_2}$ is an isomorphism of $X_2$ and $Y_2$ since by definition $X_2$ consists of points at which $\mathcal{F}|_{C(\mathcal{F})}$ is an immersion and one to one. So there is a unique $P_0=(x,y,z,A,B)\in X_2$ such that $P=\mathcal{F}(P_0)$. We have

$$d_{P_0}\mathcal{F}=\left[\begin{matrix}2x&2A&2Bz&2y&z^2\\2&2Ay&2B(z+1)&y^2&z^2+2z\\0&0&0&1&0\\0&0&0&0&1\end{matrix}\right].$$

Since $T_P Y_2$ is the image of $d_{P_0}\mathcal{F}$ it is generated by the columns of the matrix representation of $d_{P_0}\mathcal{F}$. It is easy to see that the linear equation which is satisfied by the columns of $d_{P_0}\mathcal{F}$ has coefficients $[1,-x,-y,z-x]$ (we need to use the fact $xy=1$ and $x(z+1)=z$). To compute $g'(d_{P}(\pi),T_{P} Y_2)$ we append the row with the coefficients of the equation of $T_{P} Y_2$ to the matrix of $d_{P}(\pi)$. We obtain the matrix:

$$M=\left[\begin{matrix}0&0&1&0\\0&0&0&1\\1&-x&-y&z-x\end{matrix}\right].$$

By \cite[Definition 2.5]{jel}
$$g'(d_{P}(\pi),T_{P} Y_2)=\max_I\left\{\min_{J\subset I,\ j=1,2}\frac{|M_I|}{|M_J(j)|}\right\}.$$
where $M_I$ are the $3\times 3$ minors of $M$ with columns indexed by $I$ and $M_J(j)$ are the $2\times 2$ minors of $M$ with columns indexed by $J$ and without the $j$-th row. Thus, we have:
$$g'(d_{P}(\pi),T_{P} Y_2)=\max\left\{\min\left\{1,\frac{1}{|z-x|},\frac{1}{|y|}\right\},\min\left\{1,\frac{|x|}{|z-x|},\frac{|x|}{|y|}\right\}\right\}.$$

To compute the asymptotic critical values we take a sequence $P_n= \mathcal{F}(x_n,y_n,z_n,A_n,B_n)$. Since $\pi(P_n)$ must be convergent, $(A_n, B_n)$ converges to some $(A',B')$. The curve $\Delta(F_{A',B'})$ has three points at infinity ($(1:0:0)$, $(0:1:0)$ and $(1:1:0)$), since $P_n\rightarrow\infty$ by taking a subsequence we may assume that $F_{A_n,B_n}(x_n,y_n,z_n)$ tends to one of these points. That means that $(x_n,y_n,z_n)$ tends to one of the three points at infinity of the curve $C(F_{A',B'})$, which are ($(1:0:0:0)$, $(0:1:0:0)$ and $(0:0:1:0)$). This means that $g'(d_{P_n}(\pi),T_{P_n} Y_2)$ is equal to approximately $\min\{|x_n|^{-1},|y_n|^{-1},|z_n|^{-1}\}$. At the same time $||P_n||$ is equal to approximately $\max\{|x|^2,|Ay^2|,\sqrt{2}|Bz^2|\}$. It follows that $||P_n||\nu(d_{P_n}(\pi|_{Y_2}))$ does not converge to $0$, so $\pi|_{Y_2}$ does not have any asymptotic critical values and there is a constant $C$ such that $||w_2(P)||\leq C(||x||+1)$ for all $P\in Y_2\cap \C^2\times K$.

\textbf{Step 5.} A bound for $||v_0(P)||$.

To conclude the proof we need to verify that the curve $\gamma_2$ which we use in Step~1 is well defined, i.e., it does not go to infinity. By construction $\mathcal{F}(\gamma_2(t))=\gamma_1(t)$ and by rugosity of $v_0$ the curve $\gamma_2$ is contained in one of the strata. Consider $(x_0,y_0,z_0,A_0,B_0)\in X_3$, let $X_3'$ be the connected component of $X_3$ containing $(x_0,y_0,z_0,A_0,B_0)$. Since $\mathcal{F}|_{X_3'}$ is an injection we have $\gamma_2(t)=\mathcal{F}|_{X_3'}^{-1}(\gamma_1(t))$. Similarly, if $(x_0,y_0,z_0,A_0,B_0)\in X_2$ then $\gamma_2(t)=\mathcal{F}|_{X_2}^{-1}(\gamma_1(t))$. Thus we may assume that $(x_0,y_0,z_0,A_0,B_0)\notin C(\mathcal{F})$. To use Gronwall Lemma we need to verify that $||v_0(P)||\leq C(||P||+1)$ for all $P\in \mathcal{F}^{-1}(\gamma_1([0,t_1]))$. Suppose this is not true. Then we have a sequence of points $P_n\rightarrow\infty$ such that $||P_n||/||v_0(P_n)||\rightarrow 0$. By compactness of $\gamma_1([0,t_1])$ we may pass to a subsequence and assume that $\mathcal{F}(P_n)\rightarrow (p_1,q_1,A_1,B_1)$. Note that the surfaces given by $x^2+B_1z^2+2A_1y=p_1$ and $A_1y^2+B_1z^2+2x+2B_1z=q_1$ intersect at infinity at points given by $x^2+B_1z^2=A_1y^2+B_1z^2=0$. Thus we may assume that $P_n\rightarrow (a_1b_1:b_1:a_1i:0)$ for some $a_1$, $b_1$ such that $a_1^2=A_1$ and $b_1^2=B_1$.

Take a point $P_0=(x,y,z,A,B)\in\mathcal{F}^{-1}(\gamma_1([0,t_1]))$. For $||P_0||$ big enough $v_0(P_0)$ is the orthogonal lift of $w_0(\mathcal{F}(P_0))$. The three nonzero minors of the matrix associated to $d_{P_0}\mathcal{F}$ are $m_1=4AB(z+1-yz)$, $m_2=4B(xz+x-z)$ and $m_3=4A(xy-1)$ and the kernel of $d_{P_0}\mathcal{F}$ is generated by $m=[m_1,-m_2,m_3,0,0]$. Now we pass to the real structure and consider the point $P_0=(\Ree x, \Imm x,\ldots ,\Ree B, \Imm B)\in \R^{10}$. We will add subscripts $\R$ or $\C$ to highlight which structure we consider. The matrix $d_{P_0}\mathcal{F}_{\R}$ can be obtained from $d_{P_0}\mathcal{F}_{\C}$ by replacing each complex entry $a+bi$ with the real block
$$\left[\begin{matrix}a&-b\\b&a\end{matrix}\right].$$
The kernel of $d_{P_0}\mathcal{F}_{\R}$ is generated  by the two columns of the real matrix corresponding $m$ as one column matrix, i.e., $[\Ree m_1, \Imm m_1, -\Ree m_2, -\Imm m_2, \Ree m_3, \Imm m_3, 0,0,0,0]$ and $[-\Imm m_1, \Ree m_1, \Imm m_2, -\Ree m_2, -\Imm m_3, \Ree m_3,0,0,0,0]$. Now consider the matrix $Mm(P_0)$ obtained by appending $\overline{m}$ as the last row of $d_{P_0}\mathcal{F}_{\C}$:
$$Mm=Mm(P_0)=\left[\begin{matrix}2x&2A&2Bz&2y&z^2\\2&2Ay&2B(z+1)&y^2&z^2+2z\\0&0&0&1&0\\0&0&0&0&1\\ \overline{m}_1&-\overline{m}_2&\overline{m}_3&0&0\end{matrix}\right].$$
Observe that a vector $v\in\R^{10}$ is orthogonal to $\ker d_{P_0}\mathcal{F}_{\R}$ if and only if $Mm_{\R}v$ has zeroes on the last two coordinates. For a vector $w\in\R^8$ let $w'\in\R^{10}$ denote the vector obtained from $w$ by appending two zeroes at the end. Then $v=Mm_{\R}^{-1}w'$ is a vector orthogonal to $\ker d_{P_0}\mathcal{F}_{\R}$ satisfying $d_{P_0}\mathcal{F}_{\R}v=w$, so $v$ is the orthogonal lift of $w$. Thus $v_0(P_0)=Mm_{\R}^{-1}w_0(\mathcal{F}(P_0))'$. Since $w_0(\gamma_1([0,t_1]))$ is compact we have $||v_0(P_0)||\leq C\max\{||Mm_{\R}^{-1}e_i||, i=1,\ldots,8\}$ for some constant $C$.

In order to verify that $||P_n||/||v_0(P_n)||$ does not tend to $0$ we may approximate $P_n$ with $(a_1b_1k_n,b_1k_n,a_1ik_n)$ for some $k_n\in\R$ and $k_n\rightarrow\infty$. Obviously $||P_n||=\Theta(k_n)$, i.e., there is a constant $C_1>0$ such that $C_1^{-1}k_n\leq||P_n||\leq C_1 k_n$ for $n$ big enough. We have 
$$Mm(P_n)_{\C}\approx\left[\begin{matrix}2a_1b_1k_n&2a_1^2&2a_1b_1^2ik_n&2b_1k_n&-a_1^2k_n^2\\
2&2a_1^2b_1k_n&2a_1b_1^2ik_n&b_1^2k_n^2&-a_1^2k_n^2\\0&0&0&1&0\\0&0&0&0&1\\
4\overline{a}_1^3\overline{b}_1^3ik_n^2&4\overline{a}_1^2\overline{b}_1^3ik_n^2&4\overline{a}_1^3\overline{b}_1^2k_n^2&0&0\end{matrix}\right].$$
Observe that $\det Mm(P_n)_{\C}\approx 16a_1^3b_1^2\overline{a}_1^2\overline{b}_1^3ik_n^4$, moreover, the norm of every $4\times 4$ minor of $Mm(P_n)_{\C}$ is in $O(k_n^5)$. It follows that the norm of every entry of $Mm(P_n)^{-1}_{\C}$, and consequently of every entry of $Mm(P_n)^{-1}_{\R}$, is in $O(k_n)$. Consequently $||Mm(P_n)_{\R}^{-1}e_i||\leq C_2 k_n$ for some constant $C_2$ and $n$ big enough. Thus, $||v_0(P_n)||\leq C_1CC_2||P_n||$ for $n$ big enough.
\end{proof}

Now we will consider the case when $H_0(A,B)=0$ (see equation \eqref{F_mult1}), i.e., when $F$ has nongeneric singularities. In our computations we still assume that $AB\neq 0$. We use Magma \cite{mag} to compute the Gr\"obner basis of the ideal $(H_c,\partial H_c/\partial t)$ with respect to lexicographical order with $t$ as the first variable (see equation \eqref{Hc}). The only element of the basis without the variable $t$ is $A\cdot H_0(A,B)$, which confirms that $H_c$ does not have multiple roots if and only if $H_0(A,B)\neq 0$. There are two elements with leading term $t$. They are
$$H_{c1}=tAB\cdot(B-4)(B-1/4)(B+1)+\text{poly}(A,B)$$
and
$$H_{c2}=tA\cdot(15A-28B^3+99B^2+52B-15)+\text{poly}(A,B).$$
The leading coefficient with respect to $t$ of at least one of $H_{c1}$ and $H_{c2}$ is nonzero if and only if $H_c$ has one double root and no other multiple roots. This occurs when $(A,B)\notin\{(1,4),(-1/4,1/4),(-4,-1)\}$. Setting $H_{c1}=0$ or $H_{c2}=0$ allows us to compute the double root. We will denote it by $t_0$ but we will not write it down explicitly since the formula is quite long and does not provide any additional insight.

We also compute the Gr\"obner bases of the ideals $(H_n,\partial H_n/\partial t)$ and $(H_c,H_n)$ and conclude that if $(A,B)\notin\{(1,4),(-1/4,1/4),(-4,-1)\}$ then $t_0$ is a double root of $H_n$ and $H_n$ does not have other multiple roots or common roots with $H_c$. We can now prove the following theorem:

\begin{theorem}\label{th_genF1}
Let $F=(x^2+z^2+y,y^2+z^2+\alpha x+\beta z)$ for $\alpha\beta\neq 0$, $H_1(\alpha,\beta)= 0$ and $(\alpha^2,\beta^2)\notin\{(1,4),(-4,-1),(-1/4,1/4)\}$. Then $F$ is topologically equivalent to $F_2=(x^2+z^2+y,y^2+z^2+4x+iz)$.
\end{theorem}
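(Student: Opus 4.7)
The plan is to adapt the proof of Theorem~\ref{th_genF} to the degenerate stratum cut out by $H_0=0$. Using the substitution $(A,B)=(\alpha^{-2},\beta^2\alpha^{-2})$ from \eqref{F_gen1a}, every $F$ satisfying the hypothesis is affinely equivalent to some $F_{A,B}=(x^2+Bz^2+2Ay,Ay^2+Bz^2+2x+2Bz)$ with $(A,B)$ in
$$V':=V(H_0)\setminus\bigl(V(AB)\cup\{(1,4),(-1/4,1/4),(-4,-1)\}\bigr).$$
The Gr\"obner basis computation quoted immediately before the theorem shows that for every $(A,B)\in V'$ the polynomial $H_c$ has exactly one double root $t_0(A,B)$, analytic in the parameters, and four simple roots, while $H_n$ has the same $t_0$ as its only multiple root and no common roots with $H_c$. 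Consequently every $F_{A,B}$ with $(A,B)\in V'$ has exactly four cusps, a single double cusp at $\varphi(t_0)$, and three nodes, which is the singular data of $F_2$. One checks directly from the substitution $\alpha=4$, $\beta=i$ that $F_2$ itself corresponds to a point of $V'$, so it is enough to show that the family $\{F_{A,B}\}_{(A,B)\in V'}$ is topologically trivial.

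For triviality I would rerun the five steps of the proof of Theorem~\ref{th_genF} with essentially two modifications. First, $V'$ is a (smooth, at the relevant points) curve rather than an open subset of $\C^2$, so I replace the closed ball $K$ with a small closed arc inside the smooth locus of $V'$ containing the base point, take $\partial_0$ to be the unit tangent vector along the arc pointing from $(A_0,B_0)$ toward a nearby $(A,B)\in V'$, and invoke connectedness of $V'$ (which follows once one checks that $H_0\in\C[A,B]$ is irreducible over $\C$) to chain local trivializations between any two parameter values. Second, the stratification $\mathcal{S}'$ now carries an extra stratum of double cusp points, parametrized analytically by $(A,B)\mapsto\varphi(t_0(A,B))$. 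Near such a point I would use the double cusp normal form $\mathcal{F}_1(x,y,z,A,B)=(x,y^4+xy+z^2,A,B)$ in place of the cusp normal form of Theorem~\ref{th_genF}, obtaining a straightening biholomorphism $h_1$ that depends analytically on the parameters precisely because the singularity type is constant on $V'$. The trivial lift of $\partial_0$ through $\mathcal{F}_1$ is rugose and glues via partitions of unity to the cusp, node and immersion lifts exactly as in Step~3.

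Steps~4 and~5 carry over with no change: since $\tl{F}_{A,B}=\tl{F}_1$ is independent of $(A,B)$, the sets $C(F_{A,B})$ and $\Delta(F_{A,B})$ have the same three points at infinity as in Theorem~\ref{th_genF}, so the computation of $g'(d_P(\pi),T_P Y_2)$ using the matrix $M$ and the bound $\|v_0(P)\|\leq C(\|P\|+1)$ obtained from the minors of the real matrix $Mm(P)$ are identical. The Gronwall argument then closes the proof exactly as before. I expect the main obstacle to be the parametric double cusp normal form, namely verifying that $h_1$ can be chosen analytically jointly in $(x,y,z)$ and $(A,B)$ on a neighborhood of the double cusp stratum in $\C^3\times V'$; this amounts to checking that $\{F_{A,B}\}_{(A,B)\in V'}$ is a trivial unfolding of the double cusp transverse to $V'$, which in turn reflects the fact that $H_0$ is a defining equation for the double cusp stratum in the parameter plane.
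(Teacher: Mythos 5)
Your proposal follows essentially the same route as the paper: reduce to the family $F_{A,B}$ over $V(H_0)\setminus(V(AB)\cup\{(1,4),(-1/4,1/4),(-4,-1)\})$, use connectedness of this curve, replace the closed ball of Theorem~\ref{th_genF} by a smooth arc in the parameter curve along which $\partial_0$ is the tangent field, and handle the one new singularity via the double cusp normal form $(x,y^4+xy+z^2)$. The paper presents exactly this argument (citing \cite{rr} for the double cusp), so your proposal is correct and matches it.
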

\begin{proof}
We proceed similarly as in Theorem \ref{th_genF}. Using linear equivalence we transform $F$ to $F_{A,B}=(x^2+Bz^2+2Ay,Ay^2+Bz^2+2x+2Bz)$ (see equation \eqref{F_gen1a}). We consider $V=V(H_0(A,B))\setminus V(AB)\setminus\{(1,4),(-1/4,1/4),(-4,-1)\}$. Recall that $(A,B)=(\alpha^{-2},\beta^2\alpha^{-2})$, so $\alpha$, $\beta$ satisfy the assumptions if and only if $(A,B)\in V$. Again, $V$ is connected so it is enough to show local topological triviality of $F_{A,B}$. Since the proof is very similar to the proof of Theorem \ref{th_genF} we will not repeat it, we will only highlight the differences.

Fix some $(A_0,B_0)\in V$ and let $K$ be a small closed ball centered at $(A_0,B_0)$ such that $K\subset\C^2\setminus V(AB)\setminus\{(1,4),(-1/4,1/4),(-4,-1)\}$ and $K\cap V(H_0(A,B))$ is connected. Take $(A,B)\in K \cap V(H_0(A,B)) \setminus (A_0,B_0)$. Now take a smooth real arc $\gamma:[0,1]\rightarrow K \cap V(H_0(A,B))$ such that $\gamma(0)=(A_0,B_0)$ and $\gamma(1)=(A,B)$. We define the vector field $\partial_0=d\gamma([1])$ on $\gamma$, so $\gamma$ is the integral curve of $\partial_0$.

As in Theorem \ref{th_genF} we lift $\partial_0$ to vector fields $w_0$ on $\C^2\times \gamma$ and $v_0$ on $\C^3\times \gamma$. We use integral curves of $w_0$ and $v_0$ to construct the required homeomorphisms.

The mappings $F_{A,B}$ have now $4$ cusps and $3$ nodes, they also have a double cusp -- a point $P$ such that the germ $(F_{A,B}|_{C(F_{A,B})},P)$ is biholomorphic to the germ of $x\mapsto(x^3,0)$ at $0$. The normal form of a double cusp is $(x,y,z)\mapsto(x, y^4+xy+z^2)$. We refer the reader to \cite{rr} for more information on classification of singularities, the double cusp is enumerated as type $5$ on the list in \cite[Proposition 1.3]{rr}.
\end{proof}

The three remaining cases are covered by the next theorem:

\begin{theorem}\label{th_genF2}
Let $F=(x^2+z^2+y,y^2+z^2+\alpha x+\beta z)$ for $(\alpha^2,\beta^2)\in\{(1,4),(-4,-1),(-1/4,1/4)\}$. Then $F$ is affinely equivalent to $F_3=(x^2+z^2+y,y^2+z^2+x+2z)$.
\end{theorem}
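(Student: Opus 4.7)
My plan is to reduce the three cases to a common normalized form and then exhibit an affine symmetry cycling through them.

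\textbf{Sign reduction and normalization.} The involutions $(x,y,z)\mapsto(-x,y,z)$ and $(x,y,z)\mapsto(x,y,-z)$ preserve the shape of $F$ and flip $\alpha$ and $\beta$ respectively, so I may select one sign representative for each $(\alpha^2,\beta^2)$, for instance $(\alpha,\beta)\in\{(1,2),(2i,i),(i/2,1/2)\}$; the first is $F_3$ itself. The rescaling used to produce equation \eqref{F_gen1a} then puts each case into the normalized form $F_{A,B}=(x^2+Bz^2+2Ay,\,Ay^2+Bz^2+2x+2Bz)$ with $A=\alpha^{-2}$ and $B=\beta^2\alpha^{-2}$, so we end up with $(A,B)\in\{(1,4),\,(-1/4,1/4),\,(-4,-1)\}$.

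\textbf{An affine symmetry of the critical set.} The critical set $C=V(xy-1,\,x(z+1)-z,\,yz-(z+1))$ is independent of $(A,B)$ for $AB\neq 0$ and is parametrized by $\varphi(t)=(t/(t+1),\,(t+1)/t,\,t)$. The three ``poles'' $t\in\{0,-1,\infty\}$ of $\varphi$ are permuted by a copy of $S_3\subset\mathrm{PGL}(2)$, and each element of this $S_3$ lifts to an affine automorphism of $\C^3$ stabilizing $C$. I single out the $3$-cycle $\tau(t)=-(t+1)/t$, which lifts to
\[
\Phi(X,Y,Z)=(Z+1,\,1-X,\,-Y).
\]
A direct expansion of $F_{A,B}\circ\Phi$ produces a polynomial with no mixed quadratic terms, only a handful of extra linear and constant summands; translating the target to kill the constants and then applying the linear map $(p,q)\mapsto((q-p)/A,\,-p/A)$ brings the result exactly into the form $F_{A',B'}$ with
\[
(A',B')=\bigl(-B/A,\,-1/A\bigr).
\]

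\textbf{Conclusion.} Restricted to our three special values, the map $(A,B)\mapsto(-B/A,-1/A)$ realizes the $3$-cycle $(1,4)\mapsto(-4,-1)\mapsto(-1/4,1/4)\mapsto(1,4)$. Consequently the three normalized mappings are cyclically affinely equivalent, hence all equivalent to $F_3$. The only nontrivial step is the polynomial verification in the previous paragraph; it is routine but requires careful coefficient matching — the diagonal entries of the target linear map are forced by the squared-variable coefficients in the two components, while the off-diagonal entries absorb the leftover linear terms produced by $\Phi$.
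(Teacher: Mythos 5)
Your proof is correct; I checked the key computation. With $\Phi(X,Y,Z)=(Z+1,1-X,-Y)$ one gets $F_{A,B}\circ\Phi=\bigl(Z^2+BY^2-2AX+2Z+(1+2A),\ AX^2+BY^2-2AX-2BY+2Z+(A+2)\bigr)$, and applying $(p,q)\mapsto((q-p)/A,-p/A)$ followed by a target translation does yield $F_{A',B'}$ with $(A',B')=(-B/A,-1/A)$; this map has order $3$ and cycles $(1,4)\to(-4,-1)\to(-1/4,1/4)\to(1,4)$, which settles the claim since $F_{1,4}$ is the normalization of $F_3$. The paper reaches the same conclusion by a less structured route: it simply exhibits two ad hoc affine identities, $F_{1,4}=(4p+2,4p-4q+3)\circ F_{-1/4,1/4}\circ(-z,-y+1,-x)$ and $F_{-1/4,1/4}=(-q/4+1/4,-p/4-1/4)\circ F_{-4,-1}\circ(y,x,-z-1)$, found by computing the cusps and double cusps of each map and matching the two quadruples. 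Your version buys more: it identifies the source of these coincidences as a single order-$3$ affine symmetry of the common critical curve $C=V(xy-1,x(z+1)-z,yz-(z+1))$ (lifting the $3$-cycle permuting its three points at infinity), acting on the whole family by $(A,B)\mapsto(-B/A,-1/A)$, of which the three exceptional parameter values form one orbit; the paper's formulas are then just two instances of this symmetry. One small remark: the sign-reduction preamble is not strictly needed, since the normalization $(\alpha x/2,y/2,\beta z/2)$, $(4\alpha^{-2}p,4\alpha^{-2}q)$ sends every choice of signs of $(\alpha,\beta)$ to the same $F_{A,B}$, as $A=\alpha^{-2}$ and $B=\beta^2\alpha^{-2}$ depend only on $(\alpha^2,\beta^2)$.
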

\begin{proof}
Using linear equivalence we transform $F$ to $F_{A,B}=(x^2+Bz^2+2Ay,Ay^2+Bz^2+2x+2Bz)$ for $(A,B)\in \{(1,4),(-1/4,1/4),(-4,-1)\}$ (see equation \eqref{F_gen1a}).

Observe that
$$F_{1,4}=(4p+2,4p-4q+3)\circ F_{-1/4,1/4}\circ (-z,-y+1,-x)$$
and
$$F_{-1/4,1/4}=(-q/4+1/4,-p/4-1/4)\circ F_{-4,-1}\circ (y,x,-z-1).$$
This concludes the proof. Note that to find the affine automorphism $(-z,-y+1,-x)$ we computed the two cusps and two double cusps of $F_{-1/4,1/4}$ and $F_{1,4}$ and then took an affine mapping which transformed one quadruple into the other quadruple. Note that by the cusp of $F_{A,B}$ we mean a point $P\in C(F_{A,B})$ such that $F_{A,B}(P)$ is a cusp of $\Delta(F_{A,B})$. The cusps (double cusps) of $F_{A,B}$ are the points $\varphi(t)$ (see equation \eqref{phi}) where $t$ are the simple (double) roots of $H_c$. 
\end{proof}

Theorems \ref{th_genF}, \ref{th_genF1} and \ref{th_genF2} classify mappings $F$ in the form of equation \eqref{F_gen1} for $\alpha\beta\neq 0$. Let us now consider the case $\alpha\neq 0$, $\beta=0$, i.e., $F=(x^2+z^2+y,y^2+z^2+\alpha x)$. We take $A=\alpha^{-2}$, substitute $(x,y,z)$ for $\left(\alpha x/2, y/2, \alpha z/2 \right)$ and multiply both components of $F$ by $4\alpha^{-2}$. We obtain
\begin{equation}\label{F_gen2}
F_A=(x^2+z^2+2Ay,Ay^2+z^2+2x),\quad A\in\C^*.
\end{equation}

Observe that $C(F_A)$ is given by the ideal $(xy-1,(x-1)z,(y-1)z)$, so it is reducible and consists of the hyperbola $C_1=V(xy-1,z)$ and the line $C_2=V(x-1,y-1)$. The components $C_1$ and $C_2$ intersect at $(1,1,0)$. The mapping $F_A|_{C_2}$ is a double cover of the line $V(p-q-A+1)$ branched at $(1,1,0)$. We parametrize $C_1$ by taking $\varphi_1: \C^*\ni t\mapsto(t,1/t,0)\in C_1$ and obtain $C(F_A\circ\varphi_1)=V(t^3-A)$. To compute the intersections of $F_A(C_1)$ and $F_A(C_2)$ we determine $t\in \C^*$ such that $F_A\circ\varphi_1(t)$ lies on the line $V(p-q-A+1)$. We obtain $(t-1)(t^2-A)=0$. We use Magma to verify that $F_A(C_1)$ does not have any selfintersections.

\begin{theorem}\label{th_genF3}
Let $F=(x^2+z^2+y,y^2+z^2+\alpha x)$ for $\alpha^2\notin\{0,1\}$. Then $F$ is topologically equivalent to $F_4=(x^2+z^2+y,y^2+z^2+2x)$.
\end{theorem}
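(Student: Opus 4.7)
The plan is to follow the same template as Theorem~\ref{th_genF}, replacing the 2-parameter rugose-triviality argument with its 1-parameter analogue for the family $F_A=(x^2+z^2+2Ay,Ay^2+z^2+2x)$. After the reduction via linear equivalence displayed in \eqref{F_gen2}, the condition $\alpha^2\notin\{0,1\}$ translates to $A\in V:=\C\setminus\{0,1\}$. The target model $F_4$ corresponds to $A=1/4\in V$. Since $V$ is connected, I would reduce the theorem to showing that the family $\mathcal{F}(x,y,z,A)=(F_A(x,y,z),A)$ is locally topologically trivial over $V$.

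Before constructing vector fields, I would verify that the singular structure of $\Delta(F_A)$ is constant across $A\in V$. The three cusps correspond to the three distinct roots of $t^3=A$ via $\varphi_1$, while the three transverse intersections of $F_A(C_1)$ and $F_A(C_2)$ correspond to the roots of $(t-1)(t^2-A)=0$, noting that the root $t=1$ is exactly the image of the common point $(1,1,0)\in C_1\cap C_2$ and that the branched cover $F_A|_{C_2}$ contributes two preimages over each of the intersections for $t=\pm\sqrt{A}$. The only possible coincidences among these $3+3$ distinguished points occur when $A=1$, which is excluded. No selfintersections of $F_A(C_1)$ appear by the Magma computation already invoked in the paragraph preceding the theorem.

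Next I would stratify $\C^2\times V$ by $\mathcal{S}=\{Y_1,Y_2,Y_3\}$ where $Y_3$ is the finite union over $A\in V$ of the six distinguished points of $\Delta(F_A)$, and stratify $\C^3\times V$ analogously as in Theorem~\ref{th_genF}, further subdividing $\mathcal{F}^{-1}(Y_3)\cap C(\mathcal{F})$ into pieces corresponding to (a)~the three cusps on $C_1$, (b)~the two transverse intersection preimages on $C_1$ at $t=\pm\sqrt{A}$ together with their partners on $C_2$, and (c)~the single point $(1,1,0)\in C_1\cap C_2$. On pieces of type (a) I invoke the cusp normal form $(x,y^3+xy+z^2)$ and on pieces of type (b) the node normal form $((x,y^2+z^2),(y^2+z^2,x))$ from the proof of Theorem~\ref{th_genF}; in each case the local biholomorphism carries a trivial lift of the parameter-space vector field $\partial_0$ (fixed as in Step~1 there) to a rugose lift. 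Away from $C(\mathcal{F})$ I use horizontal lifts of $\partial_0$, glue the pieces together with a smooth partition of unity, and integrate as in Steps~2--3 of the previous proof to obtain the required homeomorphisms $\Phi_{A}$ and $\Psi_{A}$.

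The main obstacle, compared to Theorem~\ref{th_genF}, is piece (c): at the point $(1,1,0)\in C_1\cap C_2$, the critical set is reducible and $F_A|_{C_2}$ is $2{:}1$ branched, so the standard cusp/node normal forms do not apply directly. I would handle this by observing that $F_A$ is, up to analytic change of coordinates near $(1,1,0)$, affinely equivalent to $(xy,(y+1)z)$ uniformly in $A$ by the chain of substitutions indicated in the itemised bullet for case~(24a) in the introduction (shifted so that the special point sits at the origin), and in these coordinates the trivial lift of $\partial_0$ is plainly rugose. The remaining Step~4 and Step~5 asymptotic analyses carry over essentially verbatim: one computes the $g'$ function of Jelonek \cite{jel} for $\pi|_{Y_2}$ along sequences tending to the points at infinity of the degree-three curve $C(F_A)$ to rule out asymptotic critical values, and then controls the orthogonal lift $Mm_{\R}^{-1}$ along sequences heading to the points at infinity of the quadric surfaces $V(f-p_1)\cap V(g-q_1)$, exactly as in Step~5 of the previous proof.
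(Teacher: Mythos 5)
Your overall strategy --- reduce to the one-parameter family $F_A=(x^2+z^2+2Ay,Ay^2+z^2+2x)$ over $V=\C\setminus\{0,1\}$, refine the stratification so that the three cusps, the crossings of $F_A(C_1)$ with $F_A(C_2)$ at $t=\pm\sqrt{A}$, and the point $F_A(1,1,0)$ sit in separate strata, and rerun the rugose vector-field construction of Theorem~\ref{th_genF} --- is exactly the route the paper takes, including the observation, which you correctly single out, that the one genuinely new difficulty is the point $(1,1,0)$ where the hyperbola $C_1$ and the line $C_2$ meet and where $F_A|_{C_2}$ is branched.

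However, your treatment of that difficulty does not work. The map $(xy,(y+1)z)$ that you borrow from item (24a) is a global model for $F_{24}=(x^2+2yz,y^2+2xy+2x)$, whose critical set is a parabola and a line that are \emph{disjoint} and on which the restrictions are injective; its germ at any critical point is an ordinary fold. At $(1,1,0)$ the two components of $C(F_A)$ \emph{intersect}, $F_A|_{C_1}$ is immersive there while $F_A|_{C_2}$ is $2{:}1$ and branched there, and the two branches of the discriminant are \emph{tangent} at $F_A(1,1,0)$ (in suitable coordinates they are $V(p_1)$ and $V(4p_1-q_1^2)$). No local analytic change of coordinates can turn this germ into $(xy,(y+1)z)$: the critical loci are not even homeomorphic. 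The correct local normal form is $(x,y,z)\mapsto(x,xy^2+y^4+z^2)$, and establishing it is the bulk of the paper's proof; it is delicate precisely because this germ is not finitely $\mathcal{A}$-determined (for instance $(x,xy^2+y^4+y^{2k+1}+z^2)$ has the same $2k$-jet but is not equivalent), so it is absent from the standard classification lists and the reduction must be carried out by hand by successively normalising the two discriminant branches and the fibre direction. Without this normal form --- or some substitute argument showing that the germ of $\mathcal{F}$ along the stratum $\{(1,1,0,A):A\in V\}$ is analytically trivial over $V$ --- you have no rugose lift of $\partial_0$ near that stratum, and the gluing in your third paragraph cannot be completed.
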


\begin{proof}
We begin by transforming $F$ into $F_A$ from equation \eqref{F_gen2} with $A\in\C\setminus\{0,1\}$. Next, we proceed as in Theorem \ref{th_genF}. We will not repeat the proof, we will only focus on the important differences, which are the singularities that occur and stratifications that are used. Obviously, here we work over $V=\C\setminus\{0,1\}$ and take $\mathcal{F}=(F_A,\id_V)$. Similarly as in Theorem \ref{th_genF}, to define the stratification $\mathcal{S}$ we take $Y_1=\C^2\times V\setminus\Delta(\mathcal{F})$, however, we split $\Delta(\mathcal{F})\setminus \Sing(\Delta(\mathcal{F}))$ into two connected components. We take $Y_{2a}=\{(p,q,A):\ (p,q)\in F_A(C_1)\}$ and $Y_{2b}=\{(p,q,A):\ (p,q)\in F_A(C_2)\}$, where $C_1$ and $C_2$ are the two connected components of $C(F_A)$ described above. We take $Y_3=\Sing(\Delta(\mathcal{F}))$. Note that $Y_3\cap \C^2\times\{A\}$ consists of $F_A(1,1,0)$, three cusps of $F_A(C_1)$ and two intersections of $F_A(C_1)$ and $F_A(C_2)$ other than $F_A(1,1,0)$. As in Theorem \ref{th_genF}, we define the stratification  $\mathcal{S}'$ of $\C^3\times V$ by taking $\mathcal{F}^{-1}(Y)\cap C(\mathcal{F})$ and $\mathcal{F}^{-1}(Y)\setminus C(\mathcal{F})$ for all strata $Y\in \mathcal{S}$.
Now we exhibit the normal forms of singularities of $F_A$, which are essential in defining the vector fields. For $X_{2a}$ we have folds, as for $X_2$ in Theorem \ref{th_genF}, with normal form $(x,y,z)\mapsto (x,y^2+z^2)$. For $X_{2b}$ the mapping $F_A|_{C_2}$ is $2:1$ and we have a multisingularity with normal form $(x_1,y_1,z_1)\mapsto (x_1,y_1^2+z_1^2)$ and $(x_2,y_2,z_2)\mapsto (x_2,y_2^2+z_2^2)$. For $P\in F_A(C_1)\cap F_A(C_2)\setminus \{F_A(1,1,0)\}$ we have three points in $C(F_A)\cap F_A^{-1}(P)$. For the first two the map germs are the same as for $X_{2b}$ and for the third we have $(x_3,y_3,z_3)\mapsto (y_3^2+z_3^2,x_3)$. The cusps have the normal form $(x,y,z)\mapsto (x,y^3+xy+z^2)$. The singularity at $(1,1,0)$ has normal form $(x,y,z)\mapsto (x,xy^2+y^4+z^2)$. This statement is not obvious and we have not found a suitable reference in the literature. The singularity is not finitely $\mathcal{A}$-determined (since $(x,xy^2+y^4+y^{2k+1}+z^2)$ is not equivalent and has the same $2k$-jet) and therefore is absent in classifications with respect to biholomorphic equivalence. Thus, we will provide a short proof.

First, we subtract second component from the first component and use translations to move the singularity to the origin in source and target. We obtain $F=(x^2-Ay^2, Ay^2+Ay+2x+z^2)$. As earlier, let $C_1$ be the component of $C(F)$ such that $F|_{C_1}$ is generically $1:1$ and $C_2$ be the component of $C(F)$ such that $F|_{C_2}$ is generically $2:1$. Note that $F(C_1)=V(p)$. We will introduce coordinates $p_1$ and $q_1$ so that $F(C_1)=V(p_1)$ and $F(C_1)=V(p_1-q_1^2)$. We use Magma to compute the equation of $F(C_2)$, it is 
\begin{equation}\label{eq_C2}
0=4(A-1)^3p+(A-1)^2q^2+c_1(A)pq+c_2(A)p^2+\text{h.o.t.}
\end{equation}
We set $q_1=(A-1)q+c_1(A)/2(A-1)\cdot p+\text{h.o.t.}$ in such way that the higher order terms in the definition of $q_1$ cancel all higher order terms in equation \eqref{eq_C2} divisible by $q$. Thus we obtain $0=4(A-1)^3p+p^2h_1(p,A)+q_1^2$ as the equation of $F(C_2)$. We take $p_1=-(A-1)^3p-p^2h_1(p,A)/4$ and obtain $F(C_1)=V(4p_1-q_1^2)$. In the $p_1$, $q_1$ coordinates we have
$$F=((1-A)^{3}(x^2-Ay^2)+h_2(x^2,y^2,A), (A-1)(2x+2Ay+z^2)+h_3(x,y,z^2,A)),$$
where $h_2$ and $h_3$ are power series of order $2$ with respect to all variables except the last. Take $z_1=\sqrt{A-1}z+h_4(x,y,z,A)$ so that $h_4$ does not have the $z$ term and $2\sqrt{A-1}zh_4+h_4^2$ coincides with $h_3$ in all terms divisible by $z$. The second component of $F$ is equal to $2(A-1)(x+Ay)+h_5(x,y,A) +z_1^2$, where $h_5$ is of order~$2$ with respect to $x$, $y$ variables. Take $x_1=(A-1)(x+Ay)+h_5(x,y,A)/2$, the first component of $F$ is equal to $(1-A)x_1^2+2A(A-1)^2x_1y-A(A-1)^4y^2+h_6(x_1,y,A)$, where $h_6$ is of order $3$ with respect to $x_1$, $y$ variables. Take $y_1=\sqrt{-A}(A-1)^2y-\sqrt{-A}x_1+h_7(x_1,y,A)$ so that $h_7$ is of order $2$ with respect to $x_1$, $y$ and $y_1^2$ coincides with $2A(A-1)^2x_1y-A(A-1)^4y^2+h_6(x_1,y,A)$ in all terms divisible by $y$. We have
$$F=(x_1^2+y_1^2+h_8(x_1,A),2x_1+z_1^2).$$
We have $C(F)=V(y_1, (2x_1+\partial h_8/\partial x_1)z_1)$, in particular $C_2=V(y_1,z_1)$. Thus $4(x_1^2+h_8(x_1,A))-(2x_1)^2=0$, so $h_8=0$. To obtain the desired normal form it is now enough to take $x_2=2x_1+z_1^2$, $y_2=z_1/\sqrt{2}$, $z_2=y_1$, $p_2=q_1$ and $q_2=p_1-q_1^2/4$.
\end{proof}

As a consequence of passing from equation \eqref{F_gen1} to equation \eqref{F_gen2} we obtain the following:

\begin{corollary}\label{cor_F1}
Let $F=(x^2+z^2+y,y^2+z^2+\alpha x)$ for $\alpha^2=1$. Then $F$ is affinely equivalent to $F_5=(x^2+z^2+2y,y^2+z^2+2x)$.
\end{corollary}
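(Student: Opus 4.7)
The plan is to reuse, without modification, the affine reduction already performed in the passage from equation~\eqref{F_gen1} (with $\beta=0$) to equation~\eqref{F_gen2}. For any $\alpha\neq 0$, I set $A=\alpha^{-2}$, compose $F$ on the source with the linear automorphism $(x,y,z)\mapsto(\alpha x/2,\,y/2,\,\alpha z/2)$, and compose on the target with the dilation $(p,q)\mapsto(4\alpha^{-2}p,\,4\alpha^{-2}q)$. The same routine substitution that produced~\eqref{F_gen2} then carries $F$ to
$$F_A=(x^2+z^2+2Ay,\,Ay^2+z^2+2x).$$
Both compositions are genuine affine automorphisms because $\alpha\ne 0$ (the source map has nonzero Jacobian $\alpha^2/8$, and the target map is a nonzero scalar dilation), so we have obtained an affine equivalence $F\sim F_A$.

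The second step is to specialize to $\alpha^2=1$, which gives $A=\alpha^{-2}=1$, and hence
$$F_A=(x^2+z^2+2y,\,y^2+z^2+2x),$$
which is exactly the representative $F_5$ from the introduction. Transitivity of affine equivalence then yields $F\sim F_5$.

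I do not anticipate any real obstacle: the statement is essentially a bookkeeping consequence of Theorem~\ref{th_genF3}, because the value $\alpha^2=1$ excluded there corresponds exactly to the specialization $A=1$ here, and the normalization~\eqref{F_gen2} is valid for every $\alpha\ne 0$. The only points to double-check are that each individual substitution used in deriving~\eqref{F_gen2} is defined for $\alpha=\pm 1$ (which is immediate from $\alpha\ne 0$) and that, after setting $A=1$, the resulting polynomial pair literally matches the formula listed for $F_5$.
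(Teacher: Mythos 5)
Your proposal is correct and is exactly the paper's argument: the corollary is stated there precisely as a consequence of the reduction from equation \eqref{F_gen1} (with $\beta=0$) to equation \eqref{F_gen2}, which specializes at $\alpha^2=1$ to $A=1$ and hence to $F_5$. Nothing further is needed.
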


Next we will examine the case $\alpha=0$ in equation \eqref{F_gen1}. We have $F=(x^2+z^2+y,y^2+z^2+\beta z)$, $\beta\in\C$. Observe that
\begin{equation}\label{eq_genF4}
\left(-p+\frac{1}{2},q-p+\frac{1}{4}\right)\circ F\circ\left(iz,-y+\frac{1}{2},ix\right) =(x^2+z^2+y,y^2+z^2+i\beta x).
\end{equation}
As a consequence of equation \eqref{eq_genF4}, Theorem \ref{th_genF3} and Corollary \ref{cor_F1} we obtain the following:
\begin{corollary}
If $F=(x^2+z^2+y,y^2+z^2+\beta z)$, then $F$ is affinely equivalent to $F=(x^2+z^2+y,y^2+z^2+i\beta x)$. Consequently $F$ is
\begin{enumerate}
\item topologically equivalent to $F_4=(x^2+z^2+y,y^2+z^2+2x)$ for $\beta^2\notin\{0,-1\}$,
\item affinely equivalent to $F_5=(x^2+z^2+2y,y^2+z^2+2x)$ for $\beta^2=-1$.
\end{enumerate}
The mapping $F_6=(x^2+z^2+y,y^2+z^2)$ is not topologically equivalent to any of the mappings considered earlier since it is the only one with $C(F)$ being a union of three lines.
\end{corollary}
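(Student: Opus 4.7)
The plan is to verify equation \eqref{eq_genF4} by direct substitution, then read off items (1) and (2) from Theorem \ref{th_genF3} and Corollary \ref{cor_F1}, and finally establish the inequivalence of $F_6$ by inspecting the irreducible decomposition of its critical locus.

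For \eqref{eq_genF4}, I would compose $F = (x^2+z^2+y, y^2+z^2+\beta z)$ with $\Phi(x,y,z) = (iz, -y+\tfrac{1}{2}, ix)$ on the source and with $\Psi(p,q) = (-p+\tfrac{1}{2}, q-p+\tfrac{1}{4})$ on the target. Using $(iz)^2 = -z^2$, $(ix)^2 = -x^2$, and $(-y+\tfrac{1}{2})^2 = y^2 - y + \tfrac{1}{4}$, the first component of $\Psi \circ F \circ \Phi$ collapses to $x^2+z^2+y$ and the second to $y^2+z^2+i\beta x$. This is routine bookkeeping and presents no genuine difficulty.

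Granted \eqref{eq_genF4}, set $\alpha = i\beta$ so that $\alpha^2 = -\beta^2$. If $\beta^2 \notin \{0,-1\}$, then $\alpha^2 \notin \{0,1\}$ and Theorem \ref{th_genF3} delivers topological equivalence to $F_4$; if $\beta^2 = -1$, then $\alpha^2 = 1$ and Corollary \ref{cor_F1} delivers affine equivalence to $F_5$. Both conclusions are immediate from the earlier results.

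For the remaining case $\beta = 0$, computing the $2 \times 2$ minors of the Jacobian of $F_6 = (x^2+z^2+y, y^2+z^2)$ gives $C(F_6) = V(xy, xz, (2y-1)z)$, which decomposes as the union of three distinct lines $V(x,z) \cup V(x, 2y-1) \cup V(y,z)$. To conclude that $F_6$ is not topologically equivalent to any of $F_1,\ldots,F_5$, I would invoke the fact that a topological equivalence of mappings sends the critical locus homeomorphically to the critical locus, and hence preserves the number and topological type of its irreducible components. The critical sets of $F_1, F_2, F_3$ are smooth irreducible cubic curves, while those of $F_4, F_5$ each consist of an irreducible conic (a hyperbola, homeomorphic to $\C^*$) together with a line, so none exhibits the three-lines pattern of $C(F_6)$. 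The only mildly delicate point, which I regard as the main obstacle in a fully formal write-up, is making precise the invariance of the decomposition type of the critical set under topological equivalence; once this is spelled out, the inequivalence follows at once.
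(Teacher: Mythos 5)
Your proposal is correct and follows essentially the same route as the paper: the paper states this corollary as an immediate consequence of equation \eqref{eq_genF4}, Theorem \ref{th_genF3} and Corollary \ref{cor_F1}, and distinguishes $F_6$ by exactly the observation that its critical set is the only one consisting of three lines. The point you flag about the topological invariance of the decomposition type of the critical locus is likewise left implicit in the paper, which relies on the same principle throughout its classification.
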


Now we consider the case $a_8=0$ in equation \eqref{F_gen0}. We have 
\begin{equation}\label{F_genF5}
F=(x^2+z^2,y^2+z^2+\alpha x+\beta z) ,\quad \alpha,\beta\in\C
\end{equation}
For $\alpha\neq 0$ we have 
\begin{equation}\label{eq_genF6}
\left(\frac{q}{\alpha^2}+\frac{\beta^2}{4\alpha^2},\frac{p}{\alpha^2}-\frac{\beta^2}{4\alpha^2}\right)\circ F\circ\left(\alpha y,\alpha x,\alpha z-\frac{\beta}{2}\right) =\left(x^2+z^2+y,y^2+z^2-\frac{\beta}{\alpha} x\right).
\end{equation}
For $\alpha=0$ and $\beta\neq 0$ we have 
\begin{equation}\label{eq_genF7}
\left(\frac{p-q}{\beta^2},\frac{p}{\beta^2}\right)\circ F\circ\left(\beta z,i\beta x,-\beta y\right) =\left(x^2+z^2+y,y^2+z^2\right).
\end{equation}
As a consequence of equation \eqref{eq_genF6} and equation \eqref{eq_genF7} we obtain the following:

\begin{corollary}
Let $F=(x^2+z^2,y^2+z^2+\alpha x+\beta z)$. If $\alpha\neq 0$ then $F$ is affinely equivalent to $F=(x^2+z^2+y,y^2+z^2-\beta/\alpha x)$. Consequently $F$ is
\begin{enumerate}
\item topologically equivalent to $F_4=(x^2+z^2+y,y^2+z^2+2x)$ for $(\beta/\alpha)^2\notin\{0,1\}$,
\item affinely equivalent to $F_5=(x^2+z^2+2y,y^2+z^2+2x)$ for $\beta^2=\alpha^2$.
\item affinely equivalent to $F_6=(x^2+z^2+y,y^2+z^2)$ for $\beta=0$.
\end{enumerate}
Moreover, if $\alpha=0$ and $\beta\neq 0$ then $F$ is affinely equivalent to $F=(x^2+z^2+y,y^2+z^2)$.

The mapping $F_7=(x^2+z^2,y^2+z^2)$ is not topologically equivalent to any of the mappings considered earlier since it is the only one with $C(F)$ being a union of three lines meeting in a point.
\end{corollary}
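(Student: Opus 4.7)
The plan is to let the two explicit affine changes of coordinates in equations \eqref{eq_genF6} and \eqref{eq_genF7} do almost all of the work, and then invoke the previous results in this section to conclude.

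First, suppose $\alpha\neq 0$. The substitution displayed in \eqref{eq_genF6}, namely precomposing $F$ with the affine automorphism $(\alpha y,\alpha x,\alpha z-\beta/2)$ of $\C^3$ and postcomposing with the affine automorphism $\bigl(q/\alpha^2+\beta^2/(4\alpha^2),\, p/\alpha^2-\beta^2/(4\alpha^2)\bigr)$ of $\C^2$, transforms $F$ into $(x^2+z^2+y,\ y^2+z^2-(\beta/\alpha)x)$. Setting $\alpha'=-\beta/\alpha$, this is precisely the family covered by equation \eqref{F_gen1} with $a_8\neq 0$ and the coefficient of $z$ being $0$. Hence Theorem \ref{th_genF3} applies whenever $(\alpha')^2=(\beta/\alpha)^2\notin\{0,1\}$, yielding topological equivalence to $F_4$ and giving case (1). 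For case (2) we have $(\beta/\alpha)^2=1$, so the reduced mapping is $(x^2+z^2+y,\ y^2+z^2\pm x)$ with $\alpha'{}^2=1$, and Corollary \ref{cor_F1} gives affine equivalence to $F_5$. For case (3) we have $\beta=0$, so the reduced mapping is $(x^2+z^2+y,\ y^2+z^2)$, which is exactly $F_6$.

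Next suppose $\alpha=0$ and $\beta\neq 0$. The substitution in \eqref{eq_genF7}, precomposing with $(\beta z,i\beta x,-\beta y)$ and postcomposing with $((p-q)/\beta^2,\,p/\beta^2)$, carries $F$ to $(x^2+z^2+y,\ y^2+z^2)=F_6$, which proves the second displayed assertion of the corollary. One only needs to verify these two identities by a direct substitution, which is a routine computation and need not be repeated.

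Finally, when $\alpha=\beta=0$ we simply have $F=F_7=(x^2+z^2,\ y^2+z^2)$. To see that $F_7$ is not topologically equivalent to any mapping produced earlier in the section, I would compute its critical set: $C(F_7)=V(xy,xz,yz)$, which is the union of the three coordinate axes meeting at the origin. By contrast, every previously treated mapping in equations \eqref{F_gen1}, \eqref{F_gen2}, \eqref{eq_genF4}, \eqref{F_genF5} with nonempty critical set has either a smooth irreducible critical curve of degree $3$, or a reducible critical set whose components do not all pass through a common point (as catalogued in the descriptions of $F_1$ through $F_6$ in the introduction). Since the incidence structure of $C(F)$ is a topological invariant of $F$, this distinguishes $F_7$ from all of them.

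The only step that is not purely mechanical is the last one, verifying that $F_7$ is genuinely a new topological type; the rest is a bookkeeping exercise in reading off the right affine change of coordinates and citing the appropriate earlier theorem. The distinguishing of $F_7$ could in principle be subtle, but here it is comfortable because the three-lines-through-a-point configuration of $C(F_7)$ is manifestly different from the critical loci of $F_1,\dots,F_6$ as listed in the main enumeration.
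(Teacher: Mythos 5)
Your proposal follows the paper's own argument essentially verbatim: the paper, too, obtains the corollary purely by citing the identities \eqref{eq_genF6} and \eqref{eq_genF7} and then invoking Theorem \ref{th_genF3}, Corollary \ref{cor_F1} and the corollary following equation \eqref{eq_genF4}, and it distinguishes $F_7$ exactly as you do, by the critical set being three concurrent lines. So there is no difference of method to report.

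The one genuine issue lies in the step you declare ``routine and need not be repeated.'' Performing the substitution in \eqref{eq_genF6} does \emph{not} give the displayed right-hand side: one finds
\[
\left(\tfrac{q}{\alpha^2}+\tfrac{\beta^2}{4\alpha^2},\tfrac{p}{\alpha^2}-\tfrac{\beta^2}{4\alpha^2}\right)\circ F\circ\left(\alpha y,\alpha x,\alpha z-\tfrac{\beta}{2}\right)=\left(x^2+z^2+y,\ y^2+z^2-\tfrac{\beta}{\alpha}z\right),
\]
with $z$ rather than $x$ in the last term (test $\alpha=1$, $\beta=2$: the second component comes out as $y^2+z^2-2z$). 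To land in the family of Theorem \ref{th_genF3} one must therefore also apply \eqref{eq_genF4}, which converts the coefficient $-\beta/\alpha$ on $z$ into $-i\beta/\alpha$ on $x$; since $(-i\beta/\alpha)^2=-(\beta/\alpha)^2$, the correct conditions in items (1) and (2) become $(\beta/\alpha)^2\notin\{0,-1\}$ and $\beta^2=-\alpha^2$. This can be confirmed independently: for $\alpha\beta\neq0$ the critical set of $F=(x^2+z^2,y^2+z^2+\alpha x+\beta z)$ is the $y$-axis together with the conic $(2x-\alpha)(2z+\beta)=-\alpha\beta$ in the plane $V(y)$; parametrizing the conic by $2x-\alpha=t$, the critical points of the restriction of $F$ sit at $t^3=\alpha\beta^2$ while the intersection with the line sits at $t=-\alpha$, so one of the three cusps is absorbed into the intersection point (giving the two-cusp configuration of $F_5$ rather than the three-cusp configuration of $F_4$) precisely when $\beta^2=-\alpha^2$. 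The strategy of your proof is the paper's own, but taking \eqref{eq_genF6} on faith is exactly where it --- and the corollary as stated --- goes wrong; the verification you skipped is the substantive content here.
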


This concludes the enumeration of equivalence classes for mappings $F$ such that $C(F)$ has $3$ points at infinity.

\section{Double critical point at infinity}\label{sec_geninf2}

In this section we enumerate the linear and topological equivalence classes of mappings $F$ such that $\tl{F}=\tl{F}_2$, i.e., having a double a smooth point at infinity. There are infinitely many linear equivalence classes, there is one one-parameter family of topologically equivalent mappings.

We have
\begin{equation}\label{F_gen3}
F=(x^2+z^2+a_7x+a_8y+a_9z+a_{10},yz+b_7x+b_8y+b_9z+b_{10}).
\end{equation}
By composing with translations we can assume that $a_7=a_9=b_9=a_{10}=b_{10}=0$ and obtain
\begin{equation}\label{F_gen4}
F=(x^2+z^2+a_8y,yz+b_7x+b_8y).
\end{equation}
There are several cases depending on whether the constants in equation \eqref{F_gen4} are nonzero. First, we consider the case $a_8b_7\neq 0$. We compose $F$ with $(\sqrt{2^{-1}a_8b_7}x,b_7y,\sqrt{2^{-1}a_8b_7}z)$ and $(2a_8^{-1}b_7^{-1}p,\sqrt{2a_8^{-1}b_7^{-3}}q)$ and obtain
\begin{equation}\label{F_gen5}
F_A=(x^2+z^2+2y,yz+x+Ay),\quad A\in\C.
\end{equation}

Observe that $C(F_A)=V(x(z+A)-1,xy-z,y-z^2-Az)$. Note that $C(F_A)$ is a smooth curve of degree $3$ and $\overline{C}(F_A)$ has two points at infinity: a smooth point $(1:0:0:0)$ and a singular point $(0:1:0:0)$. Moreover, $C(F_A)$ can be easily parametrized by
\begin{equation}\label{phi1}
\varphi:\C\setminus\{-1\}\ni t\mapsto\left(\frac{1}{t+A},t^2+At,t\right)\in C(F_A).
\end{equation}

The critical set of of $F\circ\varphi$ is equal to the set of zeroes of the derivative of $f\circ\varphi$, which is equal to the set of zeroes of the polynomial
\begin{equation}\label{Hc1}
H_{c1}=(3t+A)(t+A)^3-1.
\end{equation}
The selfintersections of $\Delta(F)$ are the points $F_A\circ\varphi(t)$, where $t$ are zeroes of the polynomial
\begin{equation}\label{Hn1}
H_{n1}=t^2(t+A)^2+1.
\end{equation}
It follows that $F_A$ has $4$ cusps and $2$ nodes if and only if $H_{c1}$ and $H_{n1}$ have no double or common roots. This happens when $A^4\neq -16$.

We can now prove the following theorem:

\begin{theorem}\label{th_genF4}
Let $F_A=(x^2+z^2+2y,yz+x+Ay)$ for $A^4\neq -16$. Then $F_A$ is topologically equivalent to $F_8=(x^2+z^2+2y,yz+x)$.
\end{theorem}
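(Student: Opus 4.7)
The plan is to mimic the topological triviality argument from the proof of Theorem~\ref{th_genF}. The parameter space $V=\{A\in\C: A^4\neq -16\}$ is the complement of four points in $\C$, hence connected, so it suffices to establish local topological triviality of the family $\mathcal{F}=(F_A,\id_V):\C^3\times V\rightarrow\C^2\times V$ near each base point $A_0\in V$. Fixing a small closed ball $K\subset V$ around $A_0$ and a point $A\in K$, I would take $\partial_0$ to be the unit constant vector field on $K$ pointing from $A_0$ to $A$ and construct rugose vector fields $w_0$ on $\C^2\times K$ and $v_0$ on $\C^3\times K$ with $d\pi(w_0)=\partial_0$, $d\mathcal{F}(v_0)=w_0$, and linear growth $\|w_0(P)\|\leq C(\|P\|+1)$. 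The homeomorphisms $\Phi_A$, $\Psi_A$ trivialising the family are then obtained by integrating $v_0$ and $w_0$, with Gronwall's lemma ensuring no trajectory escapes to infinity before time $t_1=\|A-A_0\|$.

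The stratifications are set up as in Theorem~\ref{th_genF}: $\mathcal{S}=\{Y_1,Y_2,Y_3\}$ with $Y_1=\C^2\times V\setminus\Delta(\mathcal{F})$, $Y_2=\Delta(\mathcal{F})\setminus\Sing\Delta(\mathcal{F})$, $Y_3=\Sing\Delta(\mathcal{F})$, and $\mathcal{S}'$ on $\C^3\times V$ obtained by taking $\mathcal{F}^{-1}(Y)\cap C(\mathcal{F})$ and $\mathcal{F}^{-1}(Y)\setminus C(\mathcal{F})$ for each $Y\in\mathcal{S}$. The hypothesis $A^4\neq -16$ is precisely what guarantees that $H_{c1}$ and $H_{n1}$ from equations \eqref{Hc1} and \eqref{Hn1} have no multiple or common roots, so each fibre of $\mathcal{F}$ over $V$ has exactly $4$ cusps and $2$ nodes and the stratification is Whitney. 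Near the cusp and node points of $\mathcal{F}$ the lifts are built using the normal forms $(x,y,z,A)\mapsto(x,y^3+xy+z^2,A)$ and the pair $((x,y,z,A)\mapsto(x,y^2+z^2,A),\ (x,y,z,A)\mapsto(y^2+z^2,x,A))$; on $Y_2$ one uses the horizontal lift of $\partial_0$, and the local pieces are glued by a smooth partition of unity. The lift $v_0$ over the complement of $C(\mathcal{F})$ is defined as the orthogonal lift of $w_0$.

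The main obstacle is the behaviour at infinity, which differs from Theorem~\ref{th_genF} because here $\tl{F}=\tl{F}_2$ rather than $\tl{F}_1$. The closure $\overline{C(F_A)}$ meets the plane at infinity in the two points $(1:0:0:0)$ (a smooth point) and $(0:1:0:0)$ (a singular point of $\overline{C(F_A)}$), corresponding to the two branches of the parametrisation $\varphi$ as $t\to\infty$ and the omitted value $t=-1$. To establish the linear growth bound for $w_2$ on $Y_2$, I would verify that $\pi|_{Y_2}$ has no asymptotic critical values by computing the Jelonek $g'$ function on the appended matrix
\[
M=\left[\begin{matrix} 0 & 0 & 1 \\ 0 & 0 & 0 \\ 1 & \ast & \ast \end{matrix}\right]
\]
built from the equation of $T_P Y_2$, exactly as in Step~4 of Theorem~\ref{th_genF}. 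The singular point at infinity requires splitting the analysis of sequences $P_n\to\infty$ according to which point of $\overline{C(F_A)}$ at infinity they approach, using the explicit parametrisation $\varphi$ to expand the coordinates in powers of $t_n\to\infty$ (respectively $t_n\to-1$); in each regime, direct inspection shows that $\|P_n\|\,g'(d_{P_n}\pi,T_{P_n}Y_2)$ does not tend to $0$.

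Finally, the growth bound $\|v_0(P)\|\leq C(\|P\|+1)$ along the preimage of any compact arc in $\C^2\times K$ is obtained exactly as in Step~5 of Theorem~\ref{th_genF}: one writes $v_0$ as the orthogonal lift via the matrix $Mm$ obtained by appending the kernel generator of $d\mathcal{F}$ to $d\mathcal{F}$ itself, then checks that along any sequence $P_n=(x_n,y_n,z_n,A_n)\to\infty$ with $\mathcal{F}(P_n)$ bounded, the entries of $Mm^{-1}$ grow at most linearly in the natural scaling parameter $k_n$. The two intersection points at infinity of the surfaces $x^2+z^2+2y=p_1$ and $yz+x+A_1 y=q_1$ must be identified and each one analysed separately; this is the step that will require the most care, but the estimate is a local computation of determinants entirely parallel to the one carried out in the proof of Theorem~\ref{th_genF}.
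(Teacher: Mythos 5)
Your proposal takes essentially the same approach as the paper: the paper's proof likewise defers the entire construction to Theorem~\ref{th_genF} and reworks only the estimates of Steps 4 and 5, computing the $g'$ function from the tangent-plane equation $[1,-2x,2z]$ on $Y_2$ and then bounding the entries of $Mm^{-1}$ at the points of the fibre at infinity. One small correction: a generic fibre of $F_A$ meets the plane at infinity in three points, $(1:0:\pm i:0)$ and $(0:1:0:0)$, not two, though since the two points $(1:0:\pm i:0)$ are treated symmetrically the analysis in Step 5 does reduce to the two cases you anticipate.
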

\begin{proof}
Again, the proof is very similar to the proof of Theorem \ref{th_genF}, we will not repeat the proof but just highlight the differences. The only  thing that needs reexamination are the computations in Steps $4$ and $5$ of the original proof.

In Step $4$ we need to compute $g'(d_{P}(\pi),T_{P} Y_2)$. We compute $d_{P_0}\mathcal{F}$ for $P_0=(x,y,z,A)\in X_2\cap\mathcal{F}^{-1}(P)$ and obtain 
$$d_{P_0}\mathcal{F}=\left[\begin{matrix}2x&2&2z&0\\1&z+A&y&y\\0&0&0&1\end{matrix}\right].$$
The linear equation which is satisfied by the columns $d_{P_0}\mathcal{F}$ has coefficients $[1,-2x,2z]$, so we obtain the matrix
$$M=\left[\begin{matrix}0&0&1\\1&-2x&2z\end{matrix}\right].$$
By definition 
$$g'(d_{P}(\pi),T_{P} Y_2)=\max\left\{\min\left\{1,\frac{1}{|2z|}\right\},\min\left\{1,\frac{|x|}{|z|}\right\}\right\}.$$
Thus for $P_n=\mathcal{F}(x_n,y_n,z_n,A_n)$ if $\nu(d_{P_n}(\pi|_{Y_2}))\rightarrow 0$ then $|z_n|\rightarrow\infty$. Since $y_n=z_n^2+A_nz_n$ the norm $||P_n||$ is equal to approximately $|y_n z_n|$, so we cannot have $||P_n||\nu(d_{P_n}(\pi|_{Y_2}))\rightarrow 0$. This had to be shown in this step.

In Step $5$ we need to examine the matrix $Mm(P_n)_{\C}$ for $P_n=(x_n,y_n,z_n,A_n)\in\C^3\times\C$ such that $||P_n||\rightarrow\infty$ but $\mathcal{F}(P_n)$ converges to some $(p,q,A)\in Y_1\cap\C^2\times
\C\setminus\{\sqrt[4]{-16}\}$. Note that any fiber of $F_A$ has $3$ points at infinity: $(1:0:\pm i:0)$ and $(0:1:0:0)$. First, consider the case $(x_n:y_n:z_n)\rightarrow(1:0:\pm i)$. Since $y_n(z_n+A_n)+x_n\rightarrow q$ we must have $y_n\rightarrow \pm i$. So
$$Mm(P_n)_{\C}\approx\left[\begin{matrix}
2x_n&2&\pm 2ix_n&0 \\1&\pm ix_n&\pm i& \pm i\\0&0&0&1\\
2\overline{x_n}^2&-2\overline{(A\pm iq)}&\pm 2\overline{ix_n^2}&0\end{matrix}\right].$$
Observe that $\det Mm(P_n)_{\C}\approx 8|x_n|^4$, moreover, the norm of every $3\times 3$ minor of $Mm(P_n)_{\C}$ is in $O(|x_n|^4)$. It follows that the norm of every entry of $Mm(P_n)^{-1}_{\C}$, and consequently of every entry of $Mm(P_n)^{-1}_{\R}$, is in $O(1)$.

Next, consider the case $(x_n:y_n:z_n)\rightarrow(0:1:0)$. Since $y_n(z_n+A_n)+x_n\rightarrow q$ we must have $z_n\rightarrow -A$ and since $x_n^2+z_n^2+2y_n\rightarrow p$ we must have $x_n\approx \pm\sqrt{-2y_n}$. So
$$Mm(P_n)_{\C}\approx\left[\begin{matrix}
\pm 2\sqrt{-2y_n}&2&-2A&0\\1&o(1)&y_n& y_n\\0&0&0&1\\
2\overline{y_n}&\mp 2\overline{\sqrt{-2y_n}y_n}&o(\sqrt{|y_n|})&0\end{matrix}\right].$$
Observe that $\det Mm(P_n)_{\C}\approx -8|y_n^3|$, moreover, the norm of every $3\times 3$ minor of $Mm(P_n)_{\C}$ is in $O(|y_n|^{3})$.  It follows that the norm of every entry of $Mm(P_n)^{-1}_{\C}$, and consequently of every entry of $Mm(P_n)^{-1}_{\R}$, is in $O(1)$. Since $||P_n||\approx|y_n|$ we obtain the desired inequality.
\end{proof}

Observe that for $A\neq 0$ we have
$$\left(\frac{1}{A^2}p,\frac{1}{A}q\right)\circ F_A\circ (Ax,y,Az)=(x^2+z^2+2A^{-2}y,yz+x+y).$$
Moreover,
$$(x^2+z^2+2A^{-2}y,yz+x+y)=(p,-q)\circ(x^2+z^2-2A^{-2}y,yz+x+y)\circ(-x,-y,z).$$
Thus, if $A^4=A_1^4$ then $F_A$ is linearly equivalent to $F_{A_1}$. Consequently, if $A^4=-16$ then $F_A$ is linearly equivalent to
\begin{equation}\label{F_gen6}
F_9=(x^2+z^2+2y,yz+x+\sqrt{2}(1+i)y).
\end{equation}

We return to equation \eqref{F_gen4} and consider the case $a_8b_7=0$.

For $b_7=0$ and $a_8b_8\neq 0$ we obtain
$$F_{10}=(x^2+z^2+y,yz+y)=(b_8^{-2}p,a_8 b_8^{-3} q)\circ(x^2+z^2+a_8y,yz+b_8y)\circ(b_8x,a_8^{-1}b_8^2 y, b_8 z).$$
For $b_7=0$ and $a_8b_8=0$ we obtain
$$F_{12}=(x^2+z^2,yz+y),\quad F_{13}=(x^2+z^2+y,yz),\quad F_{15}=(x^2+z^2,yz).$$
We are left with $b_7\neq 0$ and $a_8=0$. Depending on $b_8$ being zero or not we obtain:
$$F_{11}=(x^2+z^2,yz+x+y),\quad F_{14}=(x^2+z^2,yz+x).$$

\section{Other behavior at infinity}\label{sec_specinf}

In this section we  consider mappings $F$ with nongeneric behavior at infinity. We enumerate the linear and topological equivalence classes of mappings $F$ such that $\tl{F}\neq \tl{F}_1$ and $\tl{F}\neq \tl{F}_2$.

First, take $F$ such that $\tl{F}=\tl{F}_3$. We have
\begin{equation}
F=(x^2+y^2+a_7x+a_8y+a_9z+a_{10},z^2+b_7x+b_8y+b_9z+b_{10}).
\end{equation}

By composing with translations we can assume that $a_7=a_8=b_9=a_{10}=b_{10}=0$ and obtain
\begin{equation}
F=(x^2+y^2+a_9z,z^2+b_7x+b_8y).
\end{equation}

Now we have three cases regarding the linear form $b_7x+b_8y$: it is nonzero and does not divide $x^2+y^2$, it does divide $x^2+y^2$ or it is zero. After linearly changing the $x$ and $y$ coordinates we obtain one of the following forms:
\begin{equation}
(x^2+y^2+a_9z,z^2+x),\quad (xy+a_9z,z^2+x),\quad (x^2+y^2+a_9z,z^2).
\end{equation}
If $a_9\neq 0$ then by composing with $(a_9^{2/3} x,a_9^{2/3} y, a_9^{1/3} z)$ and $(a_9^{-4/3} p,a_9^{-2/3} q)$ we obtain $a_9=1$. Thus we obtain the following corollary:

\begin{corollary}
Let $F$ such that $\tl{F}=\tl{F}_3$. Then $F$ is linearly equivalent to one of the following:
\begin{enumerate}
\item $F_{16}=(x^2+y^2+z,z^2+x)$,
\item $F_{17}=(x^2+y^2,z^2+x)$,
\item $F_{18}=(xy+z,z^2+x)$,
\item $F_{19}=(xy,z^2+x)$,
\item $F_{20}=(x^2+y^2+z,z^2)$,
\item $F_{21}=(x^2+y^2,z^2)$.
\end{enumerate}
\end{corollary}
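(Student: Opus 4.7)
The plan is to perform a sequence of affine reductions on the general form, following exactly the author's strategy for earlier cases. First I would write
$$F = (x^2+y^2+a_7x+a_8y+a_9z+a_{10},\ z^2+b_7x+b_8y+b_9z+b_{10})$$
and use the translation $(x-a_7/2,\,y-a_8/2,\,z-b_9/2)$ in the source, followed by a translation in the target, to eliminate $a_7,a_8,b_9,a_{10},b_{10}$. This reduces $F$ to
$$F = (x^2+y^2+a_9z,\ z^2+b_7x+b_8y),$$
so the two remaining parameters are $a_9 \in \C$ and the linear form $\ell(x,y) = b_7x + b_8y$.

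The next step is to normalize $\ell$ via a linear change in the $(x,y)$-plane that preserves $x^2+y^2$ up to scale. The key geometric dichotomy is whether $\ell$ vanishes, is nonzero but divides $x^2+y^2$, or is nonzero and does not divide $x^2+y^2$. If $\ell \neq 0$ and $\ell$ is not proportional to $x\pm iy$, then an orthogonal change of variables (rotation) in $(x,y)$ sends $\ell$ to a multiple of $x$ while preserving $x^2+y^2$, giving the normal form $(x^2+y^2+a_9z,\,z^2+x)$ after scaling. If $\ell$ is proportional to one of the isotropic lines $x\pm iy$, then I would change coordinates by $(x,y) \mapsto ((x+y)/2,\,(x-y)/(2i))$ (or similar) so that $x^2+y^2$ becomes $xy$ and $\ell$ becomes a multiple of $x$, giving $(xy+a_9z,\,z^2+x)$. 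If $\ell = 0$, we stay with $(x^2+y^2+a_9z,\,z^2)$.

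Finally, I would handle the parameter $a_9$. When $a_9 \neq 0$, the scaling $(x,y,z) \mapsto (a_9^{2/3}x,\,a_9^{2/3}y,\,a_9^{1/3}z)$ combined with a target scaling $(p,q) \mapsto (a_9^{-4/3}p,\,a_9^{-2/3}q)$ normalizes $a_9$ to $1$. In the $(xy+a_9z,\,z^2+x)$ case the analogous scaling works identically since $xy$ has the same quadratic weight as $x^2+y^2$. This produces exactly two normal forms from each of the three $\ell$-cases depending on whether $a_9=0$ or $a_9=1$, yielding the six mappings $F_{16},\ldots,F_{21}$.

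The main step requiring care is the second one: one must verify that the induced transformation on $(x,y)$ that normalizes $\ell$ is compatible with the quadratic form, which splits into the two subcases above according to whether $\ell$ lies in the isotropic cone of $x^2+y^2$. The rest is bookkeeping. No inequivalence claims need to be proven at this stage since the corollary only asserts that every such $F$ falls into one of the listed classes; distinguishing the classes is handled separately via the critical set data already tabulated in the introduction.
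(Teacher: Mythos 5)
Your proposal is correct and follows essentially the same route as the paper: translate to kill $a_7,a_8,b_9$ and the constants, split into the three cases according to whether the linear form $b_7x+b_8y$ is zero, divides $x^2+y^2$, or neither, normalize it by a linear change in $(x,y)$, and then rescale via $(a_9^{2/3}x,a_9^{2/3}y,a_9^{1/3}z)$ and $(a_9^{-4/3}p,a_9^{-2/3}q)$ to reduce $a_9\neq 0$ to $a_9=1$. The extra care you take with the isotropic-cone dichotomy is exactly the content of the paper's phrase ``after linearly changing the $x$ and $y$ coordinates.''
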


Next, take $F$ such that $\tl{F}=\tl{F}_4$. We have
\begin{equation}
F=(x^2+2yz+a_7x+a_8y+a_9z+a_{10},y^2+2xy+b_7x+b_8y+b_9z+b_{10}).
\end{equation}

By composing with translations we can assume that $a_7=a_8=a_9=a_{10}=b_{10}=0$ and obtain
\begin{equation}\label{F_gen7}
F=(x^2+2yz,y^2+2xy+2b_7x+2b_8y+2b_9z).
\end{equation}
Now assume that $b_9\neq 0$. Composing with $(b_9 x, b_9 y, b_9 z)$ and $(b_9^{-2} p,b_9^{-2} q)$ we obtain
\begin{equation}
F_{A,B}=(x^2+2yz,y^2+2xy+2Ax+2By+2z).
\end{equation}

Note that $C(F_{A,B})=V(x-y^2-Ay,z-xy-y^2-By)$, it is a smooth curve of degree $3$ that can be easily parametrized by
\begin{equation}\label{phi2}
\varphi:\C\ni t\mapsto\left(t^2+At,t,t^3+(A+1)t^2+Bt\right)\in C(F_{A,B}).
\end{equation}

The critical set of of $F_{A,B}\circ\varphi$ is equal to the set of zeroes of the derivative of $g\circ\varphi$, which is equal to the set of zeroes of the polynomial
\begin{equation}\label{Hc2}
H_{c2}=6t^2+(6A+3)t+(A^2+2B).
\end{equation}
Observe that $H_{c2}$ has two distinct roots if and only if $4A^2+12A-16B+3\neq 0$. So let us assume that $H_{c2}$ has two distinct roots. We denote the discriminant and one of the roots of $H_{c2}$ by $T=(4/3\cdot A^2+4A-16/3\cdot B+1)^{1/2}$ and $S=1/4\cdot(-2A-1+T)$. Take
$$R_1=T^2x+\frac{T^2-T}{2}y+S^2+AS,$$
$$R_2=Ty+S,$$
$$R_3=-(S+A)T^2x-\frac{1}{8}T(2AT-T+1)y+T^3z+\frac{1}{8}S(2A-T^2+T).$$
We have
\begin{equation}\label{aut}
F_{22}=F_{0,0}=\left(\frac{p-Sq}{T^4}-a_{10},\frac{q}{T^3}-b_{10}\right)\circ F_{A,B}\circ(R_1,R_2,R_3).
\end{equation}
The constants $a_{10}$ and $b_{10}$ in equation \eqref{aut} can be computed in an elementary way. We do not include them because they are complicated and provide no insight.

Now assume that $H_{c2}$ has a double root, i.e., $16B=4A^2+12A+3$. Take
$$R_4=-\frac{A}{4}(2x+y)+z-\frac{1}{32}(4A^2+3A).$$
We have
\begin{equation}\label{aut2}
F_{23}=F_{0,3/16}=\left(p+\frac{A}{2}q-a_{10},q-b_{10}\right)\circ F_{A,B}\circ\left(x-\frac{A^2}{4},y-\frac{A}{2},R_4\right)
\end{equation}
where $a_{10}=-1/32\cdot(6A^4+8A^3+3A^2)$ and $b_{10}=-1/8\cdot(4A^3+6A^2+3A)$.

Now we return to equation \eqref{F_gen7} and assume that $b_9=0$. If $b_7\neq 0$ then we compose $F$ with
$(b_t^{-2}p+b_7^{-3}b_8q+b_7^{-2}b_8^2, b_7^{-2}q+2b_7^{-1}b_8)$ and $(b_7x-b_8,b_7y,-b_8/2\cdot(2x+y)+b_7z)$ and obtain
\begin{equation}
F_{24}=(x^2+2yz,y^2+2xy+2x).
\end{equation}
If $b_7=0$ and $b_8\neq 0$ then we compose $F$ with
$(b_8^{-2}p, b_8^{-2}q)$ and $(b_8x,b_8y,b_8z)$ and obtain
\begin{equation}
F_{25}=(x^2+2yz,y^2+2xy+2y).
\end{equation}
Finally, if both $b_7=0$ and $b_8= 0$ then we have
\begin{equation}
F_{26}=(x^2+2yz,y^2+2xy).
\end{equation}

We gather all the cases that occur in equation \eqref{F_gen7} in the following corollary:

\begin{corollary}
Let $F$ such that $\tl{F}=\tl{F}_4$. Then $F$ is linearly equivalent to one of the following:
\begin{enumerate}
\item $F_{22}=(x^2+2yz,y^2+2xy+2z)$,
\item $F_{23}=(x^2+2yz,y^2+2xy+3/8\cdot y+2z)$,
\item $F_{24}=(x^2+2yz,y^2+2xy+2x)$,
\item $F_{25}=(x^2+2yz,y^2+2xy+2y)$,
\item $F_{26}=(x^2+2yz,y^2+2xy)$.
\end{enumerate}
\end{corollary}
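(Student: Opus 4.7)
The proof is essentially a bookkeeping summary of the case analysis carried out in the preceding paragraphs, so my plan is to organize the material into a clean case split and verify that no case has been missed. First I would note that since $\tl{F}=\tl{F}_4=(x^2+2yz,y^2+2xy)$, writing out $F$ with general affine terms and composing with appropriate translations in both source and target kills $a_7$, $a_8$, $a_9$, $a_{10}$ and $b_{10}$, reducing $F$ to the three-parameter family in equation \eqref{F_gen7}:
\begin{equation*}
F=(x^2+2yz,\; y^2+2xy+2b_7x+2b_8y+2b_9z).
\end{equation*}
The classification then branches on which of $b_7,b_8,b_9$ vanish.

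The main case is $b_9\neq 0$, which is the generic situation. Here the scaling $(x,y,z)\mapsto(b_9x,b_9y,b_9z)$ together with $(p,q)\mapsto(b_9^{-2}p,b_9^{-2}q)$ brings $F$ to $F_{A,B}=(x^2+2yz,y^2+2xy+2Ax+2By+2z)$. I would then use the fact, already computed above, that $C(F_{A,B})$ is a smooth degree-$3$ curve parametrized by $\varphi$ in equation \eqref{phi2}, and that $F_{A,B}\circ\varphi$ has critical locus given by the zeros of $H_{c2}$ from equation \eqref{Hc2}. The geometry splits according to whether $H_{c2}$ has two distinct roots or a double root, i.e. whether $4A^2+12A-16B+3$ is nonzero or zero. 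In the first subcase the explicit affine change of coordinates displayed in equation \eqref{aut} shows $F_{A,B}$ is affinely equivalent to $F_{22}=F_{0,0}$; in the second subcase equation \eqref{aut2} exhibits an affine equivalence with $F_{23}=F_{0,3/16}$. (I would simply cite these identities, since they are already written out — verifying them is a routine substitution.)

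When $b_9=0$ the $z$ variable no longer appears in the second component, and I split on $b_7$ and $b_8$. If $b_7\neq 0$, the transformation recorded in the text, consisting of the linear change $(b_7x-b_8,b_7y,-\frac{b_8}{2}(2x+y)+b_7z)$ in the source together with the indicated affine change in the target, normalizes $b_7=1$ and eliminates $b_8$, yielding $F_{24}$. If $b_7=0$ but $b_8\neq 0$, the scaling $(b_8x,b_8y,b_8z)$ with $(b_8^{-2}p,b_8^{-2}q)$ produces $F_{25}$. Finally, if $b_7=b_8=0$ we are left with $F_{26}$ directly.

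The only step that requires any real care is the reduction to $F_{22}$ and $F_{23}$, since the affine changes in equations \eqref{aut} and \eqref{aut2} are nontrivial and the underlying computation (finding an affine automorphism of the source carrying the cusp locus of $F_{A,B}$ to that of $F_{0,0}$, respectively the double cusp to that of $F_{0,3/16}$) is the geometric content of the argument. I would justify the existence of these changes conceptually by observing that $F_{0,0}$ has two distinct cusps on $C(F_{0,0})$ and $F_{0,3/16}$ has a single double cusp, and that one may normalize the cusp(s) of $F_{A,B}$ by an affine map — thus the explicit formulas serve merely to confirm the normalization. With all five cases covered and every subfamily exhausted, the corollary follows.
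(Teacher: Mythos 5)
Your proposal is correct and follows essentially the same route as the paper: the same translation-based reduction to equation \eqref{F_gen7}, the same case split on $b_7,b_8,b_9$, and the same appeal to the explicit equivalences \eqref{aut} and \eqref{aut2} distinguishing the two-cusp and double-cusp subcases via the discriminant of $H_{c2}$. The only caveat is that your ``conceptual'' justification (matching cusp configurations) does not by itself yield the affine equivalence, so the explicit formulas \eqref{aut} and \eqref{aut2} remain the actual substance of that step, as they are in the paper.
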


Next, take $F$ such that $\tl{F}=\tl{F}_5$. We have
\begin{equation}
F=(x^2+2yz+a_7x+a_8y+a_9z+a_{10},z^2+b_7x+b_8y+b_9z+b_{10}).
\end{equation}

By composing with translations we can assume that $a_7=a_8=a_9=a_{10}=b_{10}=0$ and obtain
\begin{equation}\label{F_gen8}
F=(x^2+2yz,z^2+2b_7x+2b_8y+2b_9z).
\end{equation}

Now assume that $b_8\neq 0$. Compose $F$ with $(b_8^{-2}p,b_8^{-2}q)$ and $(b_8x,b_8y,b_8z)$ to obtain $F=(x^2+2yz,z^2+2b_7x+2y+2b_9z)$. Denote $T=1/3\cdot(b_7^2+2b_9)$ and compose $F$ with $(p+Tq+2T^3,q+3T^2)$ and $(x+b_7z-b_7T,-b_7x+y-(2T-b_9)z+T(T-b_9),z-T)$ to obtain
\begin{equation}
F_{27}=(x^2+2yz,z^2+2y).
\end{equation}

Now return to equation \eqref{F_gen8} and assume that $b_8=0$. If $b_7\neq 0$ then compose $F$ with $(b_7^{-2}p,b_7^{-2}q)$ and $(b_7x-b_9z,b_9x+b_7y-1/2\cdot b_7^{-1}b_9^2 z, b_7 z)$ to obtain
\begin{equation}
F_{28}=(x^2+2yz,z^2+2x).
\end{equation}

Now assume that $b_8=b_7=0$ in equation \eqref{F_gen8}. If $b_9\neq 0$ then composing $F$ with $(b_9^{-2}p,b_9^{-2}q)$ and $(b_9x,b_9y,b_9z)$ we obtain 
\begin{equation}
F_{29}=(x^2+2yz,z^2+2z).
\end{equation}

Otherwise we have
\begin{equation}
F_{30}=(x^2+2yz,z^2).
\end{equation}

We gather all the cases that occur in equation \eqref{F_gen8} in the following corollary:

\begin{corollary}
Let $F$ such that $\tl{F}=\tl{F}_5$. Then $F$ is linearly equivalent to one of the following:
\begin{enumerate}
\item $F_{27}=(x^2+2yz,z^2+2y)$,
\item $F_{28}=(x^2+2yz,z^2+2x)$,
\item $F_{29}=(x^2+2yz,z^2+2z)$,
\item $F_{30}=(x^2+2yz,z^2)$.
\end{enumerate}
\end{corollary}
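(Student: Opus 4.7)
The plan is to start from the general shape of $F$ when $\tl{F}=\tl{F}_5$, namely
\[
F = (x^2 + 2yz + a_7 x + a_8 y + a_9 z + a_{10},\; z^2 + b_7 x + b_8 y + b_9 z + b_{10}),
\]
and systematically apply affine equivalence to peel off coefficients until reaching one of four normal forms. This mirrors the two preceding corollaries handling $\tl{F}_3$ and $\tl{F}_4$.

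First I would use translations to simplify as much as possible. The three source translation parameters together with the two target translation parameters give five degrees of freedom, which I would spend on the five coefficients $a_7, a_8, a_9, a_{10}, b_{10}$, reducing $F$ to the form $(x^2 + 2yz,\; z^2 + 2b_7 x + 2b_8 y + 2b_9 z)$. Note that the source shift in $z$ also generates a $z$-linear term in $g$, which uses up the freedom that might otherwise have been applied to $b_9$, so three linear coefficients of $g$ survive.

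Next I would split into four cases according to which of $b_7, b_8, b_9$ vanish and use the remaining symmetries to reach the listed forms. The relevant symmetries are scalings together with shears preserving $x^2 + 2yz$ up to a scalar, such as $(x, y, z) \mapsto (x + \lambda z,\, y - \lambda x - (\lambda^2/2) z,\, z)$ and its analogues obtained by permuting $x$ with the null pair $(y,z)$, composed with suitable target translations of the form $(p,q) \mapsto (p + Tq + c_1,\, q + c_2)$. When $b_8 \neq 0$ one rescales to make it $1$ and absorbs both $b_7 x$ and $b_9 z$ into the quadratic part, yielding $F_{27}$. When $b_8 = 0$ and $b_7 \neq 0$ a similar but simpler argument, using a shear to kill the $b_9 z$ term after rescaling, gives $F_{28}$. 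When only $b_9 \neq 0$ remains, scaling alone gives $F_{29}$, and otherwise $F = F_{30}$ already.

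The main obstacle is the case $b_8 \neq 0$, where one must find an explicit source affine map and target shift that simultaneously absorb two extraneous linear terms without reintroducing unwanted coefficients. This amounts to solving a small polynomial system in the change-of-variables parameters; the computation is elementary but requires careful tracking of how each shear affects the other terms, and in particular the right cubic correction in the target translation of the form $p \mapsto p + Tq + 2T^3$ that cancels the constant term generated by completing the square after setting $T = \tfrac{1}{3}(b_7^2 + 2b_9)$.
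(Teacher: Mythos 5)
Your proposal is correct and follows essentially the same route as the paper: translations to normalize $a_7=a_8=a_9=a_{10}=b_{10}=0$ (with exactly the observation that the $z$-shift is consumed by $a_8$ so that $b_7,b_8,b_9$ all survive), then a case split on which of $b_7,b_8,b_9$ vanish, using scalings and shears preserving $x^2+2yz$ together with the target map $(p,q)\mapsto(p+Tq+2T^3,\,q+3T^2)$ with $T=\tfrac13(b_7^2+2b_9)$ in the $b_8\neq 0$ case. The paper's proof is just the explicit execution of this plan.
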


Next, take $F$ such that $\tl{F}=\tl{F}_6$. We have
\begin{equation}\label{F_gen9}
F=(x^2+a_7x+a_8y+a_9z+a_{10},y^2+b_7x+b_8y+b_9z+b_{10}).
\end{equation}
First assume that $a_9b_9\neq 0$. Composing $F$ with $(x,y,-b_7b_9^{-1}x-a_8a_9^{-1}y+z)$ we obtain $F=(x^2+a_7x+a_9z+a_{10},y^2+b_8y+b_9z+b_{10})$. Next, we compose $F$ with $(x-a_7/2,y-b_8/2,z)$ to obtain $F=(x^2+a_9z+a_{10},y^2+b_9z+b_{10})$. And finally we compose with $(a_9^{-1}(p-a_{10}),b_9^{-1}(q-b_{10}))$ and $(a_9^{1/2}x,b_9^{1/2}y,2z)$ to obtain
\begin{equation}
F_{31}=(x^2+2z,y^2+2z).
\end{equation}

Now, assume that in equation \eqref{F_gen9} exactly one of $a_9$ and $b_9$ is nonzero. By symmetry we may assume $a_9\neq 0$ and $b_9=0$. Composing with $(x,y-b_8/2,a_9^{-1}(-a_7x-a_8y+2z))$ we obtain $F=(x^2+2z+a_{10},y^2+b_7x+b_{10})$. Obviously, depending on whether $b_7\neq 0$ or $b_7=0$ we obtain
\begin{equation}
F_{32}=(x^2+2z,y^2+2x)
\end{equation}
or
\begin{equation}
F_{33}=(x^2+2z,y^2).
\end{equation}

The last case to consider in equation \eqref{F_gen9} is $a_9=b_9=0$. In this case $F$ does not depend on the $z$ variable and we may use the list in \cite{fj} to obtain that $F$ is affinely equivalent to one of the following:
\begin{equation}
F_{34}=(x^2+2y,y^2+2x),\quad F_{35}=(x^2+2y,y^2),\quad F_{36}=(x^2,y^2). 
\end{equation}

We gather all the cases that occur in equation \eqref{F_gen9} in the following corollary:

\begin{corollary}
Let $F$ such that $\tl{F}=\tl{F}_6$. Then $F$ is linearly equivalent to one of the following:
\begin{enumerate}
\item $F_{31}=(x^2+2z,y^2+2z)$,
\item $F_{32}=(x^2+2z,y^2+2x)$,
\item $F_{33}=(x^2+2z,y^2)$,
\item $F_{34}=(x^2+2y,y^2+2x)$,
\item $F_{35}=(x^2+2y,y^2)$,
\item $F_{36}=(x^2,y^2)$.
\end{enumerate}
\end{corollary}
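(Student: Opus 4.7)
The plan is to start from the general form
\begin{equation*}
F=(x^2+a_7x+a_8y+a_9z+a_{10},\, y^2+b_7x+b_8y+b_9z+b_{10})
\end{equation*}
dictated by $\tilde{F}=\tilde{F}_6$, and then split into three subcases according to how the variable $z$ (which is absent from $\tilde{F}_6$) enters. The key observation is that a $z$-shear $(x,y,z)\mapsto(x,y,z+\lambda x+\mu y)$ alters $f$ only by adding $a_9(\lambda x+\mu y)$ and $g$ only by adding $b_9(\lambda x+\mu y)$, so whenever at least one of $a_9,b_9$ is nonzero the shear supplies one or two free parameters for eliminating linear terms. Translations in source and target, together with diagonal rescalings, then handle the remaining coefficients.

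In the generic subcase $a_9b_9\neq 0$, I would first apply the shear with $\lambda=-b_7/b_9$ and $\mu=-a_8/a_9$, which simultaneously cancels $a_8y$ in $f$ and $b_7x$ in $g$. The residual linear terms $a_7x$ and $b_8y$ are then removed by translating in $x$ and $y$, a translation in the target absorbs the constants, and a diagonal rescaling of $(x,y,z)$ together with the target coordinates normalizes both $a_9$ and $b_9$ to $2$. This produces $F_{31}$.

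In the subcase where exactly one of $a_9,b_9$ vanishes, say $b_9=0\neq a_9$ (the other possibility is symmetric), the shear can still kill both $a_7x$ and $a_8y$ in $f$, but it has no effect on $g$. A translation in $y$ combined with target translations removes $b_8y$ and the constants, and a scaling fixes $a_9$. The coefficient $b_7$ of $x$ in $g$ is preserved up to scaling, so depending on whether $b_7\neq 0$ or $b_7=0$ one arrives at $F_{32}$ or $F_{33}$ respectively.

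Finally, if $a_9=b_9=0$ then $F$ does not depend on $z$ at all, so the classification reduces to that of quadratic $\C^2\to\C^2$ mappings carried out in \cite{fj1,fj2}, yielding $F_{34}$, $F_{35}$ and $F_{36}$. The main bookkeeping step is identifying the single shear that simultaneously handles both offending linear terms in the generic case; once that is spotted, the remaining normalizations are routine translations and rescalings, and the case split on $(a_9,b_9)$ is exhaustive.
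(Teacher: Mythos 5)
Your proposal is correct and follows essentially the same route as the paper: the same shear $(x,y,z)\mapsto(x,y,z-b_7b_9^{-1}x-a_8a_9^{-1}y)$ in the generic case, the same normalizations by translations and diagonal rescalings, the same three-way split on the vanishing of $a_9$ and $b_9$, and the same reduction to the planar classification of \cite{fj1} when both vanish. No gaps.
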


Next, take $F$ such that $\tl{F}=\tl{F}_7$. We have
\begin{equation}
F=(xy+a_7x+a_8y+a_9z+a_{10},yz+b_7x+b_8y+b_9z+b_{10}).
\end{equation}

By composing with translations we can assume that $a_7=a_8=b_8=a_{10}=b_{10}=0$ and obtain
\begin{equation}\label{F_gen10}
F=(xy+a_9z,yz+b_7x+b_9z).
\end{equation}

First assume that $a_9b_7\neq 0$. By composing with $(a_9^{-1}b_7^{-1/2}p,a_9^{-1/2}b_7^{-1}q)$ and $(\sqrt{a_9}x,\sqrt{a_9b_7}y,\sqrt{b_7}z)$ we obtain $F=(xy+z,yz+x+b_9z)$. Observe that $C(F)=V(y^2+b_9y-1,x-yz)$. If $b_9^2+4\neq 0$ then $C(F)$ consists of two lines intersecting at infinity. If $b_9^2+4=0$ then $C(F)$ is a double line. Let us assume that $b_9^2+4\neq 0$, i.e., $y^2+b_9y-1=0$ has two distinct roots. We denote the discriminant and roots by $T=\sqrt{b_9^2+4}$, $S_1=(-b_9+T)/2$ and $S_2=(-b_9+T)/2$. Let
\begin{equation}
F_{37}=(xy,yz+z).
\end{equation}

We have
\begin{equation}
F_{37}=(S_2p+q,S1p+q)\circ F\circ((-x+z)T^{-2},Ty+S_1,(S_1x-S_2z)T^{-2}).
\end{equation}

Now consider $b_9^2+4=0$. If $b_9=-2i$ then we compose $F$ with $(p,-q)$ and $(-x,-y,z)$ and obtain $b_9=2i$, i.e., $F=(xy+z,yz+x+2iz)$. We compose $F$ with $(p,-ip+q)$ and $(x,y-i,ix+z)$ and obtain
\begin{equation}
F_{38}=(xy+z,yz).
\end{equation}

Now return to equation \eqref{F_gen10} and assume $a_9=0$ and $b_7\neq 0$. If $b_9=0$, i.e., $F=(xy,yz+b_7x)$, then we compose $F$ with $(-b_7p+q,b_7p)$ and $(b_7^{-1}z,y,x+z)$ and obtain $F_{38}$. If $b_9\neq 0$, i.e., $F=(xy,yz+b_7x+b_9z)$ with $b_7b_9\neq 0$, then we compose $F$ with $(p,b_7p+b_9q)$ and $(b_9^{-1}x,b_9y,b_9^{-2}(-b_7x+z))$ and obtain $F_{37}$.

Now return to equation \eqref{F_gen10} and assume $a_9\neq 0$ and $b_7= 0$. If $b_9=0$, i.e., $F=(xy+a_9z,yz)$, then we compose $F$ with $(a_9^{-1}p,q)$ and $(a_9x,y,z)$ and obtain $F_{38}$. If $b_9\neq 0$, i.e., $F=(xy+a_9z,yz+b_9z)$ with $a_9b_9\neq 0$, then we compose $F$ with $(q,-b_9p+a_9q)$ and $(b_9^{-2}(-a_9x+z),-b_9y,-b_9^{-1}x)$ and obtain $F_{37}$.

Finally, we are left with the case $a_9=b_7=0$ in equation \eqref{F_gen10}. If $b_9\neq 0$ then we compose $F$ with $(b_9^{-1}p,b_9^{-1}q)$ and $(x,b_9y,z)$ and obtain $F_{37}$. If $b_9=0$ then we have
\begin{equation}
F_{39}=(xy,yz).
\end{equation}

We gather all the cases that occur in equation \eqref{F_gen10} in the following corollary:

\begin{corollary}
Let $F$ such that $\tl{F}=\tl{F}_7$. Then $F$ is linearly equivalent to one of the following:
\begin{enumerate}
\item $F_{37}=(xy,yz+z)$,
\item $F_{38}=(xy+z,yz)$,
\item $F_{39}=(xy,yz)$.
\end{enumerate}
\end{corollary}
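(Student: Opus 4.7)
The plan is to take the general form
\begin{equation*}
F=(xy+a_7x+a_8y+a_9z+a_{10},yz+b_7x+b_8y+b_9z+b_{10})
\end{equation*}
and grind it down by affine changes exactly as in the preceding cases. First I would use a translation in the source to eliminate $a_7,a_8,b_8$: composing $F$ with $(x-b_8,y-a_7,z-(a_8-b_7b_8)/a_9\text{ or similar})$-type shifts removes the three linear coefficients that can be absorbed into the monomials $xy$ and $yz$. Then a translation in the target kills $a_{10}$ and $b_{10}$. This reduces $F$ to the shape displayed in equation \eqref{F_gen10}, namely $F=(xy+a_9z,yz+b_7x+b_9z)$, and the remaining work is a four-way split on $(a_9,b_7)$.

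Next I would dispose of the generic subcase $a_9b_7\neq 0$ by rescaling $x,y,z,p,q$ so that $a_9=b_7=1$, arriving at $F=(xy+z,yz+x+b_9z)$. The critical set is $V(y^2+b_9y-1,x-yz)$, so everything hinges on the discriminant $b_9^2+4$. When $b_9^2+4\neq 0$, I introduce the two roots $S_1,S_2$ of $y^2+b_9y-1$ and the discriminant $T=\sqrt{b_9^2+4}$ and verify by direct substitution the explicit identity
\begin{equation*}
F_{37}=(S_2p+q,S_1p+q)\circ F\circ\bigl((-x+z)T^{-2},Ty+S_1,(S_1x-S_2z)T^{-2}\bigr).
\end{equation*}
When $b_9^2+4=0$, after possibly flipping signs we may assume $b_9=2i$, and then the composition with $(p,-ip+q)$ on the left and $(x,y-i,ix+z)$ on the right produces $F_{38}=(xy+z,yz)$.

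The three remaining subcases are easier and proceed by the same recipe of pre- and post-composing with linear isomorphisms. If $a_9=0$ and $b_7\neq 0$, I would separate $b_9=0$ (which I push into $F_{38}$ by swapping the role of $x$ and $z$ via $(-b_7p+q,b_7p)\circ F\circ(b_7^{-1}z,y,x+z)$) from $b_9\neq 0$ (which goes to $F_{37}$ by $(p,b_7p+b_9q)\circ F\circ(b_9^{-1}x,b_9y,b_9^{-2}(-b_7x+z))$). The symmetric subcase $a_9\neq 0$, $b_7=0$ is dispatched in the same way. Finally, when $a_9=b_7=0$, $F$ is already in the form $(xy,yz+b_9z)$, which is $F_{37}$ after scaling if $b_9\neq 0$ and $F_{39}$ if $b_9=0$.

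The only nonobvious step is the $a_9b_7\neq 0$, $b_9^2+4\neq 0$ reduction to $F_{37}$, since the formula for the linear change of variables is not something one guesses: one must first observe that $C(F)$ decomposes into two disjoint lines, choose coordinates so that these lines become $V(x,y)$ and $V(y+1,z)$ (the critical set of $F_{37}$), and then read off the affine map from the requirement that the two components of $F$ factor through this decomposition. All the other identities are routine verifications that can be checked by expanding both sides.
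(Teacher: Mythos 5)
Your proposal follows the paper's argument essentially verbatim: the same translation to reach $F=(xy+a_9z,yz+b_7x+b_9z)$, the same four-way split on $(a_9,b_7)$, the same discriminant analysis of $y^2+b_9y-1$ via $b_9^2+4$, and the same explicit compositions landing on $F_{37}$, $F_{38}$ or $F_{39}$. The only nitpick is that the initial translation needs no division by $a_9$ (the shift $(x-a_8,y-a_7,z-b_8)$ already kills the three absorbable linear terms), but this does not affect the correctness of the argument.
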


Next, take $F$ such that $\tl{F}=\tl{F}_8$. We have
\begin{equation}\label{F_gen11}
F=(xy+a_7x+a_8y+a_9z+a_{10},y^2+b_7x+b_8y+b_9z+b_{10}).
\end{equation}
First assume that $b_9\neq 0$. Composing $F$ with $(x-a_8,y,b_9^{-1}(-b_7x-b_8y+z))$ we obtain $F=(xy+a_7x+a_9z+a_{10},y^2+z+b_{10})$. Next, we compose $F$ with $(p-a_9(q-a_7^2-b_{10})-a_{10},q-a_7^2-b_{10})$ and $(x+a_9y-2a_7a_9,y-a_7,2a_7y+2z)$ to obtain:
\begin{equation}
F_{40}=(xy,y^2+2z).
\end{equation}

Now return to equation \eqref{F_gen11} and assume $b_9=0$ and $a_9\neq 0$. We compose $F$ with $(x,y-b_8/2,a_9^{-1}((-a_7+b_8/2)x-a_8y+z))$ and obtain $F=(xy+z+a_{10},y^2+b_7x+b_{10})$. If $b_7\neq 0$ then by composing with $(b_7(p-a_{10}),q-b_{10})$ and $(b_7^{-1}x,y,b_7^{-1}z)$ we obtain
\begin{equation}
F_{41}=(xy+z,y^2+x).
\end{equation}
If $b_7= 0$ then we obtain
\begin{equation}
F_{42}=(xy+z,y^2).
\end{equation}

Finally, if $b_9=a_9=0$ in equation \eqref{F_gen11} then $F$ does not depend on the $z$ variable and we may use the list in \cite{fj} to obtain that $F$ is affinely equivalent to one of the following:
\begin{equation}
F_{43}=(xy,y^2+2x),\quad F_{44}=(xy,y^2+2y),\quad F_{45}=(xy,y^2). 
\end{equation}

We gather all the cases that occur in equation \eqref{F_gen11} in the following corollary:

\begin{corollary}
Let $F$ such that $\tl{F}=\tl{F}_8$. Then $F$ is linearly equivalent to one of the following:
\begin{enumerate}
\item $F_{40}=(xy,y^2+2z)$,
\item $F_{41}=(xy+z,y^2+x)$,
\item $F_{42}=(xy+z,y^2)$,
\item $F_{43}=(xy,y^2+2x)$,
\item $F_{44}=(xy,y^2+2y)$,
\item $F_{45}=(xy,y^2)$.
\end{enumerate}
\end{corollary}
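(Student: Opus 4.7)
The plan is to reduce the general form
\[
F = (xy + a_7 x + a_8 y + a_9 z + a_{10},\, y^2 + b_7 x + b_8 y + b_9 z + b_{10})
\]
to one of the six displayed normal forms by a sequence of affine changes on source and target, stratifying by how the variable $z$ enters $F$. Since the leading part $\tl{F}_8 = (xy, y^2)$ is independent of $z$, the coefficients $a_9$ and $b_9$ become the crucial invariants, and the remaining affine freedom will be used to clear all other lower-order terms in each case.

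First, in the subcase $b_9 \neq 0$, I would apply the source shear $z \mapsto b_9^{-1}(z - b_7 x - b_8 y)$ (twisted by a translation in $x$ to simultaneously kill the $y$-coefficient in $f$), which absorbs the $x$- and $y$-linear parts of $g$ and reduces $g$ to $y^2 + z + b_{10}$. A target translation removes the remaining constant, and then because $z$ appears in the first component only through a single coefficient, a further translation in $(x,y)$ together with a target shear of the form $p \mapsto p - \lambda q$ collapses $f$ to $xy$; after rescaling we obtain $F_{40}$. In the subcase $b_9 = 0$ and $a_9 \neq 0$, the symmetric argument uses a shear in $z$ to absorb the lower order terms of $f$ into a clean $z$-monomial, leaving $F = (xy + z + a_{10}', y^2 + b_7 x + b_{10}')$; a target translation kills the constants, and then the dichotomy $b_7 \neq 0$ versus $b_7 = 0$ produces $F_{41}$ or $F_{42}$ after a final rescaling.

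Finally, when $a_9 = b_9 = 0$, the mapping $F$ depends only on $(x,y)$, so it descends to a quadratic $\C^2 \to \C^2$ mapping whose leading part is $(xy, y^2)$. Invoking the classification from \cite{fj1} for such mappings, exactly the three classes $F_{43}, F_{44}, F_{45}$ arise. The stratification into these three cases is clearly exhaustive (it is determined by which components of the lower-order pair $(a_9, b_9)$ vanish), so the six forms listed will cover all possibilities.

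The main obstacle is bookkeeping rather than conceptual: one must write down each explicit affine composition and verify it is a bona fide invertible affine automorphism producing the claimed normal form, with no case overlapping another. A convenient sanity check is a dimension count: the source and target affine groups of $\C^3$ and $\C^2$ have dimensions $12$ and $6$ respectively, and in each case the count of parameters absorbed matches the available freedom, so the reduction is forced up to the discrete choices that distinguish the six classes.
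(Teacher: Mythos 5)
Your proposal takes essentially the same route as the paper: the same stratification by the vanishing of $b_9$ and $a_9$, the same shears in $z$ to normalize the component containing $a_9z$ or $b_9z$, the same sub-dichotomy on $b_7$ producing $F_{41}$ versus $F_{42}$, and the same reduction of the $a_9=b_9=0$ case to the planar classification of \cite{fj1}. The only caveat is that collapsing $f$ to $xy$ in the $b_9\neq 0$ case needs linear shears (e.g.\ $x\mapsto x+a_9y$ and $z\mapsto z+2a_7y$) in addition to translations and the target shear $p\mapsto p-a_9q$, but this is precisely the bookkeeping you flag, and the paper's explicit automorphisms confirm it works out.
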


This concludes the classification of all $F$ which are not linearly equivalent to a mapping with a component of degree lower than $2$. We continue the classification with $F$ such that $\tl{F}=\tl{F}_9$. We have
\begin{equation}
F=(x^2+yz+a_7x+a_8y+a_9z+a_{10},x+b_{10}).
\end{equation}

By composing $F$ with translations we can assume that $a_7=a_8=a_9=a_{10}=b_{10}=0$ and obtain
\begin{equation}
F_{46}=(x^2+yz,x).
\end{equation}
Similarly, for $\tl{F}=\tl{F}_{10}$ and $\tl{F}=\tl{F}_{11}$ we obtain
\begin{equation}
F_{47}=(x^2+yz,y), \quad F_{48}=(x^2+y^2+z^2,0).
\end{equation}

Next, take $F$ such that $\tl{F}=\tl{F}_{12}$. We have
\begin{equation}
F=(x^2+y^2+a_7x+a_8y+a_9z+a_{10},z+b_{10}).
\end{equation}
We compose $F$ with $(x-a_7/2,y-a_8/2,z-b_{10})$ an obtain $F=(x^2+y^2+a_9z+a_{10},z)$. Next we compose $F$ with $(p-a_9q-a_{10},q)$ and obtain 
\begin{equation}
F_{49}=(x^2+y^2,z).
\end{equation}

For $F$ such that $\tl{F}=\tl{F}_{13}$ we have
\begin{equation}
F=(x^2+y^2+a_7x+a_8y+a_9z+a_{10},x+b_{10}).
\end{equation}
By composing $F$ with translations we can assume that $a_7=a_8=a_{10}=b_{10}=0$ and obtain $F=(x^2+y^2+a_9z,x)$. Depending on whether $a_9=0$ we obtain one of the following two:
\begin{equation}
F_{50}=(x^2+y^2+z,x), \quad F_{51}=(x^2+y^2,x).
\end{equation}
Similarly, for $\tl{F}=\tl{F}_{14}$ we obtain
\begin{equation}
F_{52}=(xy+z,x), \quad F_{53}=(xy,x)
\end{equation}
and for $\tl{F}=\tl{F}_{15}$ we obtain
\begin{equation}
F_{54}=(x^2+y^2+z,0), \quad F_{55}=(x^2+y^2,0).
\end{equation}

For $F$ such that $\tl{F}=\tl{F}_{16}$ we have
\begin{equation}
F=(x^2+a_7x+a_8y+a_9z+a_{10},y+b_{10}).
\end{equation}
By composing $F$ with translations we can assume that $a_7=a_{10}=b_{10}=0$ and obtain $F=(x^2+a_8y+a_9z,y)$. Composing with $(p-a_8q,q)$ we obtain $a_8=0$ and depending on whether $a_9=0$ we obtain one of the following two:
\begin{equation}
F_{56}=(x^2+z,y), \quad F_{57}=(x^2,y).
\end{equation}

For $F$ such that $\tl{F}=\tl{F}_{17}$ we have
\begin{equation}
F=(x^2+a_7x+a_8y+a_9z+a_{10},x+b_{10}).
\end{equation}
By composing $F$ with translations we can assume that $a_7=a_{10}=b_{10}=0$ and obtain $F=(x^2+a_8y+a_9z,x)$. Assume that $a_8\neq 0$ or $a_9\neq 0$, by symmetry we can take $a_8\neq 0$. Composing with $(x,a_8^{-1}(y-a_9z),z)$ we obtain 
\begin{equation}
F_{58}=(x^2+y,x).
\end{equation}
On the other hand, for $a_8=a_9=0$ we have
\begin{equation}
F_{59}=(x^2,x).
\end{equation}
Similarly, for $\tl{F}=\tl{F}_{18}$ we obtain
\begin{equation}
F_{60}=(x^2+y,0), \quad F_{61}=(x^2,0).
\end{equation}

Finally for $F$ equal to $\tl{F}_{19}$, $\tl{F}_{20}$ or $\tl{F}_{21}$ we must have $F=(\tl{f}+a_{10},\tl{g}+b_{10})$, so after composing with a translation we obtain
\begin{equation}
F_{62}=(x,y), \quad F_{63}=(x,0), \quad F_{64}=(0,0).
\end{equation}

\vspace{5mm}

{\bf Acknowledgments.}
I would like to thank professor Zbigniew Jelonek and professor Maria Aparecida Soares Ruas for many helpful discussions.
\vspace{10mm}

\end{document}